\newtheorem{theorem}{Theorem}[section]
\newtheorem{lemma}[theorem]{Lemma}
\newtheorem{proof of lemma}[theorem]{Proof of Lemma}
\newtheorem{proposition}[theorem]{Proposition}
\theoremstyle{definition}
\newtheorem{remark}[theorem]{Remark}
\numberwithin{equation}{section}
\begin{document}

\title[Fourier nonuniqueness sets for the hyperbola]
{Fourier nonuniqueness sets for the hyperbola and the Perron-Frobenius operators}
%\runningtitle{Sets of Injectivity for Weighted Twisted Spherical Means}

%    Information for first author
\author{Deb Kumar Giri}
 %Address of record for the research reported here
%\address{Deb Kumar Giri, Department of Mathematics, Indian Institute of Technology, Guwahati, India 781039.}

\address{Deb Kumar Giri, Department of Mathematics, Indian Institute of Science, Bangalore-560012, India.}
%    Current address
%\curraddr{Department of Mathematics and Statistics,
%Case Western Reserve University, Cleveland, Ohio 43403}
\email{debkumarg@iisc.ac.in}
%\thanks will become a 1st page footnote.
%\thanks{The first author was supported in part by NSF Grant \#000000.}

%    Information for second author

%\address{Rama Rawat, Department of Mathematics, Indian Institute of Technology, Kanpur-208016, India.}
%    Current address
%\curraddr{Department of Mathematics and Statistics,
%Case Western Reserve University, Cleveland, Ohio 43403}
%\email{rrawat@iitk.ac.in}
%\thanks will become a 1st page footnote.
%\thanks{The first author was supported in part by NSF Grant \#000000.}

%    General info
\subjclass[2010]{Primary 42A10, 42B10; Secondary 35L10, 37A45}

\date{\today}

\keywords{Ergodic theory, Fourier transform, Invariant measures, Klein-Gordon equation, Koopman operator, Perron-Frobenius operator.}

\begin{abstract}
Let $\Gamma$ be a smooth curve or finite disjoint union of smooth curves in the plane and $\Lambda$ 
be any subset of the plane. Let $\mathcal X(\Gamma)$ be the space of all finite complex-valued 
Borel measures in the plane which are supported on $\Gamma$ and are absolutely 
continuous with respect to the arc length measure on $\Gamma.$ Let   
$\mathcal{AC}(\Gamma,\Lambda)=\{\mu\in \mathcal{X}(\Gamma) : \hat\mu|_{\Lambda}=0\},$  
then we say that $\Lambda$ is a \textit{Fourier uniqueness set} for $\Gamma$ or $(\Gamma,\Lambda)$ 
is a Heisenberg uniqueness pair, if $\mathcal{AC}(\Gamma,\Lambda)=\{0\}.$ 
In particular, let $\Gamma$ be the hyperbola $\{(x,y)\in\mathbb R^2 : xy=1\}$ and 
$\Lambda_\beta$ be the lattice-cross in $\mathbb R^2$ is defined by 
$\Lambda_\beta=\left(\mathbb Z\times\{0\}\right)\cup\left(\{0\}\times\beta\mathbb Z\right),$  
where $\beta$ is a positive real. Then Canto-Mart\'in, Hedenmalm and Montes-Rodr\'iguez has shown 
that the space $\mathcal{AC}\left(\Gamma,\Lambda_\beta\right)$ is infinite-dimensional for 
$\beta>1.$ Further, they considered the branch 
$\Gamma_+=\{(x,y)\in\mathbb R^2 : ~xy=1,~x>0\}$ of the hyperbola $xy=1$ and the 
lattice-cross $\Lambda_{\gamma}=\left(2\mathbb Z\times\{0\}\right)\cup\left(\{0\}\times2\gamma\mathbb Z\right),$ where $\gamma$ is a positive real, and prove that $\mathcal{AC}(\Gamma_+,\Lambda_\gamma)$ 
is infinite-dimensional for $\gamma>1.$ 
\smallskip

\noindent In this paper, we prove the following results:  
\begin{enumerate}[(a)]
\item For a rational perturbation of $\Lambda_\beta$ namely,
$\Lambda_\beta^\theta=\left((\mathbb Z+\{\theta\})\times\{0\}\right)\cup\left(\{0\}\times\beta\mathbb Z\right),$ 
where $\theta=1/{p},~\text{for some}~{p}\in\mathbb N,$ and $\beta$ is a positive real,   
$\mathcal{AC}\left(\Gamma,\Lambda_\beta^\theta\right)$ is infinite-dimensional whenever $\beta>p.$
\smallskip

\item For a rational perturbation of $\Lambda_\gamma$ namely,
$\Lambda_\gamma^\theta=\left((2\mathbb Z+\{2\theta\})\times\{0\}\right)\cup\left(\{0\}
\times2\gamma\mathbb Z\right),$ where $\theta=1/q,~\text{for some}~q\in\mathbb N,$ 
and $\gamma$ is a positive real, $\mathcal{AC}\left(\Gamma_+,\Lambda_\gamma^\theta\right)$ 
is infinite-dimensional whenever $\gamma>q.$
\end{enumerate}
\end{abstract}

\maketitle

\section{Introduction}\label{section1}

\subsection{Heisenberg uniqueness pairs.} 
The uncertainty principle for Fourier transform states that a nonzero function and  
its Fourier transform both cannot be too concentrated at the same time (for the details 
see \cite{B, HJ, Hei}). The notion of Heisenberg uniqueness pair introduced recently by 
Hedenmalm and Montes-Rodr\'iguez as a version of this uncertainty principle. 
Further, the concept of Heisenberg uniqueness pair has significant similarity with mutually 
annihilating pairs of Borel measurable sets of positive measures. To describe this, consider a 
pair of Borel measurable sets $\mathcal S,\Sigma\subseteq\mathbb R.$ Then $(\mathcal S,\Sigma)$ 
forms a mutually annihilating pair if for any $\varphi\in L^2(\mathbb R)$ such that
$\text{supp}~\varphi\subset\mathcal S$ and whose Fourier transform $\hat\varphi$
supported on $\Sigma,$ implies $\varphi$ is identically zero (for more details see \cite{HJ}).

\smallskip
 
\noindent\textit{Heisenberg uniqueness pair.}  
In \cite{HR}, Hedenmalm and Montes-Rodr\'iguez proposes the following: Let $\Gamma$ be a smooth 
curve in $\mathbb R^2$ and $\Lambda$ be a subset of $\mathbb R^2.$ Let $\mathcal X(\Gamma)$ be the 
space of all finite complex-valued Borel measures $\mu$ in $\mathbb R^2$ which are supported on 
$\Gamma$ and are absolutely continuous with respect to the arc length measure on $\Gamma.$ For 
$(\xi,\eta)\in\mathbb R^2,$ the Fourier transform of $\mu$ is defined by
\begin{equation}\label{eq01}
\hat\mu{(\xi,\eta)}=\int_\Gamma e^{\pi i(x\xi+ y\eta)}d\mu(x,y).
\end{equation}
Let $\mathcal{AC}(\Gamma,\Lambda)=\{\mu\in \mathcal{X}(\Gamma) : \hat\mu|_{\Lambda}=0\},$ then 
following \cite{HR}, $\left(\Gamma, \Lambda\right)$ is said to be a \textit{Heisenberg uniqueness pair} 
(HUP) if $\mathcal{AC}(\Gamma,\Lambda)=\{0\}.$ In this case, since $\Lambda$ \textit{determine} 
the measures $\mu\in \mathcal{X}(\Gamma),$ we say that $\Lambda$ is a \textit{Fourier uniqueness set} 
for $\Gamma.$ The definition of Heisenberg uniqueness pair can be extended for more general 
measures but here we restrict our attention to those which are only absolutely continuous.  
Heisenberg uniqueness pairs satisfy the following invariance properties: 
\begin{enumerate}[(\textit{inv}-1)]
\item For any points $u_0, v_0\in\mathbb R^2,$ $\Big(\Gamma+ \{u_0\},\Lambda+\{v_0\}\Big)$ is a HUP if and only if  
$(\Gamma,\Lambda)$ is a HUP. 
\smallskip

\item $(\Gamma,\Lambda)$ is a HUP if and only if $\Big(T^{-1}(\Gamma),T^\ast(\Lambda)\Big)$  
is a HUP, where $T : \mathbb R^2\rightarrow \mathbb R^2$ be an invertible linear 
transform with adjoint $T^\ast.$ 
\end{enumerate}

\noindent{\bf\textit{The dual formulation.}}  $(\Gamma,\Lambda)$ is a HUP if and only if the 
subspace of all linear span of the functions $\{e^{\pi i(x\xi+y\eta)} :~(\xi,\eta)\in\Lambda\}$ 
is weak-star dense in $L^{\infty}(\Gamma).$ 

\smallskip

Many examples of Heisenberg uniqueness pair have been obtained in the plane as 
well as in the higher dimensional Euclidean spaces. The Heisenberg uniqueness 
pairs $(\Gamma,\Lambda)$ in which $\Gamma$ is the union of any two parallel lines 
in the plane was investigated in \cite{HR}. In \cite{Ba}, Babot studied the HUP 
in which $\Gamma$ is the certain system of three parallel lines. Later, the author 
has studied the HUP corresponding to a certain system of four parallel lines together 
with some algebraic curves (see \cite{GR1}). Further, the cases in which $\Gamma$ is the 
union of a certain system of finitely many parallel lines were studied in \cite{Bag, GR2}.
The cases in which $\Gamma$ is the unit circle were independently investigated in \cite{L, S1}. 
Later, Gonzal\'ez Vieli (see \cite{Gon}) generalized the cases for the circle to the higher
dimension using the properties of the Bessel functions $J_{(n+2k-2)/2};~k\in\mathbb {Z_+}.$ 
In \cite{Sri}, Srivastava studied the cases for the sphere in which $\Lambda$ is the cone as 
well as the cone does not contain in the zero sets of any homogeneous harmonic polynomial 
on $\mathbb R^n.$ In \cite{S2}, Sj\"{o}lin investigated the HUP corresponding to the 
parabola while the author (see \cite{CGGS}) studied certain exponential surfaces and connect 
the notion of HUP to the Euclidean motion groups. The dynamical system approach was used 
in \cite{JK} to study the cases for hyperbola, polygon, ellipse, and graph of $\varphi(t)=|t|^\alpha,$
whenever $\alpha>0.$ Later, Gr\"{o}chenig and Jaming \cite{GJ} solved the cases corresponding 
to the quadratic surface. In a recent article \cite{GS1}, the authors had extended the notion 
of Heisenberg uniqueness pair to the Heisenberg group. 
As a major development along this direction, in \cite{MHR, HR, HR2, HR3}, 
the dynamics of Gauss-type maps, Ergodic theory, Klein-Gordon equation, and the Perron-Frobenius 
operators were used to studying HUP which advanced the theory. 
\smallskip

In this paper, the problems (Theorem \ref{th6} and Theorem \ref{th11}) we consider is inspired by the articles \cite{MHR, GRA, HR}, and closely follows the methods of \cite{MHR} with moderate modifications at appropriate places. We will follow definitions and notation from \cite{MHR} as much as possible.

\subsection{Nonuniqueness sets for the hyperbola}

Let $\Gamma=\{(x,y)\in\mathbb R^2 : ~xy=1\}$ be the hyperbola and $\mu\in \mathcal{X}(\Gamma),$ 
then there exists $g\in L^1(\mathbb R, \sqrt{1+1/t^4}~dt)$ such that for bounded and 
continuous function $\varphi$ on $\mathbb R^2,$
\[\int_\Gamma\varphi(x,y)d\mu(x,y)=\int_{\mathbb R\setminus\{0\}}\varphi(t,1/t)g(t)\sqrt{1+1/t^4}dt.\] 
In particular, for $(\xi,\eta)\in\mathbb R^2,$ the Fourier transform of $\mu$ can be expressed as 
\[\hat\mu{(\xi,\eta)}=\int_{\mathbb R\setminus\{0\}} e^{\pi i(\xi t+ \eta/t)}f(t)dt,~~ 
~\text{where}~f(t):=g(t)\sqrt{1+1/t^4}~\in L^1(\mathbb R).\]
\smallskip

As a first known result on HUP, in \cite{HR}, Hedenmalm and Montes-Rodr\'iguez have studied that 
some lattice-cross in the plane is a \textit{Fourier uniqueness set} for the hyperbola and have 
proved the following result. 
\begin{theorem}\cite{HR}\label{th1}
Let $\Gamma=\{(x,y)\in\mathbb R^2 : xy=1\}$ be the hyperbola and $\Lambda_\beta$ be the 
lattice-cross 
$\Lambda_{\beta}:=\left(\mathbb Z\times\{0\}\right)\cup\left(\{0\}\times\beta\mathbb Z\right),$
where $\beta$ is a positive real. Then $\mathcal{AC}\left(\Gamma,\Lambda_\beta\right)=\{0\}$ 
if and only if $\beta\leq1$.
\end{theorem}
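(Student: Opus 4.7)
My plan is to convert both Fourier-vanishing conditions into periodization identities on the $L^1$-density of $\mu$, then recast the combined constraint as an invariance equation for a Perron-Frobenius-type operator conjugate to the Gauss--Kuzmin--Wirsing transfer operator, so that the ergodic theory of the Gauss map supplies the sharp dichotomy at $\beta=1$.

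\textbf{Step 1 (reformulation).} Writing $d\mu=f(t)\,dt$ with $f\in L^1(\mathbb R)$, the condition $\hat\mu(n,0)=0$ for every $n\in\mathbb Z$ is equivalent, by Fourier series on the torus of period $2$, to the vanishing of the $2$-periodization
\[ \sum_{k\in\mathbb Z} f(t+2k) = 0 \quad \text{a.e.\ } t\in\mathbb R. \]
After the substitution $s=1/t$ and setting $\tilde f(s)=f(1/s)/s^2\in L^1(\mathbb R)$, the condition $\hat\mu(0,\beta m)=0$ for every $m\in\mathbb Z$ becomes the vanishing of the $(2/\beta)$-periodization of $\tilde f$. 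The problem reduces to deciding when no nonzero $f$ can satisfy both constraints.

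\textbf{Step 2 (transfer operator).} I would next fold the two periodization identities into a single fixed-point equation. Splitting $f$ according to the sign of $t$, using the $2$-periodization to express the values of $f$ outside $(-2,2)$ in terms of its values on $(-2,2)$, and then comparing with the second periodization through the involution $t\mapsto 1/t$, one arrives at an equation $h=T_\beta h$ on a weighted $L^1$-space on the unit interval, where $T_\beta$ is a transfer operator summing over preimages of a Gauss-type interval map with parameter $\beta$. At $\beta=1$ this is, up to conjugation, the classical Perron-Frobenius operator of the Gauss map $x\mapsto\{1/x\}$, whose fixed-vector structure is controlled by ergodicity with respect to the Gauss density $(\log 2)^{-1}/(1+x)$.

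\textbf{Step 3 (the dichotomy).} For $\beta\le 1$ the operator $T_\beta$ has trivial kernel in the relevant signed $L^1$-space: at $\beta=1$ one invokes Gauss-map ergodicity to conclude $h\equiv 0$, and for $\beta<1$ the more stringent periodization only shrinks the kernel further. For $\beta>1$ the branches of the associated interval map no longer cover $(0,1)$, so one can freely prescribe $h$ on the uncovered region and extend by the fixed-point relation, producing an infinite-dimensional family of solutions $f$ and showing $\mathcal{AC}(\Gamma,\Lambda_\beta)\neq\{0\}$.

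\textbf{Main obstacle.} The hard step is the borderline case $\beta=1$. Ergodicity implies that the only $L^1$-fixed vectors of the standard Perron-Frobenius operator of the Gauss map are scalar multiples of the invariant Gauss density $(1+x)^{-1}$, but our $T_\beta$ is a \emph{twisted} version carrying signs and Jacobian factors inherited from the two periodizations. The technical heart of the proof is to track these twists carefully and rule out that the Gauss density survives as a nonzero fixed vector of $T_\beta$; once this spectral step is in place, both directions of the theorem follow from the reduction above.
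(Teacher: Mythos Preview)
The paper does not supply a proof of this statement: Theorem~\ref{th1} is quoted from \cite{HR} as background, and no argument for it appears here. The paper's own contributions concern the perturbed lattice-crosses $\Lambda_\beta^\theta$ and $\Lambda_\gamma^\theta$ (Theorems~\ref{th6} and~\ref{th11}), for which it adapts the machinery of \cite{MHR}. So there is no in-paper proof to compare your sketch against.

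Your outline is nonetheless in the spirit of \cite{HR} and of the methods the present paper deploys for its own results: periodization reduces the two vanishing conditions to a fixed-point equation for the transfer operator of a Gauss-type interval map, and the dichotomy at $\beta=1$ reflects whether that map is expanding and filling. Two points deserve care. First, for the full hyperbola the relevant interval map is $x\mapsto\{-1/x\}_2$ on $(-1,1]$ (compare the paper's $U_\beta$ on $(-p,p]$ in Section~2.1), not the classical one-sided Gauss map $x\mapsto\{1/x\}$ on $[0,1)$; your ``up to conjugation'' hides a nontrivial odd/even reduction, and the invariant density is not literally the Gauss density $(\log 2)^{-1}(1+x)^{-1}$. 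Second, your Step~3 claim that for $\beta<1$ ``the more stringent periodization only shrinks the kernel further'' is not an argument: in \cite{HR} the case $0<\beta\le1$ is handled by a genuine ergodic/spectral step showing the transfer operator has no nontrivial fixed vector, and this does not follow from any monotonicity in $\beta$. Your identification of the borderline $\beta=1$ as the crux is correct, and your description of the $\beta>1$ construction (prescribe data freely on the uncovered region and extend) matches what \cite{MHR} and the present paper do for their nonuniqueness theorems.
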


In \cite{MHR}, Canto-Mart\'in, Hedenmalm and Montes-Rodr\'iguez have studied that some lattice-cross 
in $\mathbb R^2,$ is a \textit{Fourier nonuniqueness set} for the hyperbola and have proved the following 
result.

\begin{theorem}\cite{MHR}\label{th2}
Let $\Gamma$ be the hyperbola $xy=1$ and $\Lambda_\beta$ be the lattice-cross 
$\Lambda_{\beta}:=\left(\mathbb Z\times\{0\}\right)\cup\left(\{0\}\times\beta\mathbb Z\right),$ 
where $\beta$ is a positive real. Then $\mathcal{AC}(\Gamma,\Lambda_\beta)$ 
is infinite-dimensional for $\beta>1.$ 
\end{theorem}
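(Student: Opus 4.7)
The plan is to translate the vanishing conditions $\hat\mu|_{\Lambda_\beta}=0$ into two coupled vanishing-periodization identities for the density $f$, and then to exhibit an infinite-dimensional family of simultaneous solutions by analyzing the Perron--Frobenius operator of an associated Gauss-type interval map.

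First I would parametrize $\mu\in\mathcal X(\Gamma)$ by a density $f\in L^{1}(\mathbb R)$, as recorded in the paragraph before Theorem~\ref{th1}, so that $\hat\mu(\xi,\eta)=\int_{\mathbb R\setminus\{0\}}e^{\pi i(\xi t+\eta/t)}f(t)\,dt$. The condition $\hat\mu(n,0)=0$ for every $n\in\mathbb Z$ is, by Fourier inversion on $\mathbb T$, precisely the vanishing of the $2$-periodization of $f$,
\[
\sum_{k\in\mathbb Z}f(t+2k)=0\qquad\text{for a.e.\ }t\in[-1,1].
\]
For the vertical lattice on $\Lambda_\beta$, the substitution $s=1/t$ converts $\hat\mu(0,\beta m)=0$ $(m\in\mathbb Z)$ into the vanishing of the $(2/\beta)$-periodization of $s\mapsto f(1/s)/s^{2}$,
\[
\sum_{k\in\mathbb Z}\frac{1}{(s+2k/\beta)^{2}}\,f\!\bigl(1/(s+2k/\beta)\bigr)=0\qquad\text{for a.e.\ }s.
\]
The problem is to produce an infinite-dimensional family of $L^{1}$ densities $f$ satisfying both identities.

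Next I would reduce this coupled periodization problem to a question about a Perron--Frobenius (transfer) operator. Splitting $f$ according to the sign of its argument and restricting to the fundamental domains of the two periodizations, the pair of conditions cross-couples the values of $f$ on $[-1,1]$ with the values of $f\circ(\,\cdot\,)^{-1}/(\,\cdot\,)^{2}$ on $[-1/\beta,1/\beta]$. Alternating between the two conditions implements an interval map of Gauss type, with parameter $\alpha\sim 1/\beta$, and solutions of the original problem correspond to vectors fixed (in the appropriate weak sense) by the associated transfer operator $P_\alpha$. This reformulation is what allows the ergodic-theoretic machinery indicated by the keywords of the paper to enter the picture.

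The final and most delicate step is to show that, when $\beta>1$, the operator $P_\alpha$ admits an infinite-dimensional invariant subspace. For these parameter values the underlying Gauss-type map has a Lebesgue-dissipative component: there should exist a wandering set $W$ of positive measure whose forward iterates are pairwise disjoint, and one can then prescribe an arbitrary $L^{1}$-density on $W$ and propagate it along the orbit to obtain an invariant vector. Functions supported on disjoint pieces of $W$ yield linearly independent propagations, and pulling these back through the two substitutions above produces infinitely many linearly independent densities $f$ in $\mathcal{AC}(\Gamma,\Lambda_\beta)$. The main obstacle is precisely this dissipativity step: establishing the existence of a wandering set of positive measure for the Gauss-type map when $\beta>1$, and verifying that the propagated densities remain in $L^{1}(\mathbb R)$ after the inversion $t\mapsto 1/t$ so that they legitimately lift to measures in $\mathcal X(\Gamma)$. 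The sharpness of the threshold $\beta=1$, visible from Theorem~\ref{th1}, reflects the fact that this dissipative component appears exactly in the regime $\beta>1$.
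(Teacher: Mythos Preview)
Your reduction of the vanishing conditions $\hat\mu|_{\Lambda_\beta}=0$ to a pair of periodization identities and the subsequent passage to a Gauss-type interval map and its Perron--Frobenius operator is exactly the right starting framework, and it matches the paper (which follows \cite{MHR}, the special case $p=1$ of Theorem~\ref{th8} and Theorem~\ref{th14}). However, the mechanism you propose for the infinite-dimensionality in the last paragraph is wrong, and the argument as outlined would fail.

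You claim that for $\beta>1$ the Gauss-type map has a Lebesgue-dissipative component, i.e.\ a wandering set of positive measure on which one may freely prescribe data. The opposite is true: for $\beta>1$ the map $U_\beta$ is conservative and ergodic, with a \emph{unique} absolutely continuous invariant probability measure whose density $\varrho_0$ is strictly positive almost everywhere (this is Theorem~\ref{th13}, established via Lemma~\ref{lemma10} and Adler's criterion). There is no wandering set to exploit, so your propagation construction has nowhere to start.

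The actual source of the infinite-dimensional freedom is geometric rather than dynamical. When $\beta>1$ the interval $[-\beta,\beta]\setminus[-1,1]$ is nonempty, and one decomposes $f=f_1+f_2+f_3$ according to $[-1,1]$, $[-\beta,\beta]\setminus[-1,1]$, and $\mathbb R\setminus[-\beta,\beta]$. Proposition~\ref{prop30} characterizes membership in the pre-annihilator by two equations linking $f_1,f_2,f_3$; the component $f_2$ plays the role of free data. What makes the construction work is the \emph{spectral gap} of $\mathcal P_\beta$ on $\mathrm{BV}$: by Theorem~A and Theorem~\ref{th13} one has $\mathcal P_\beta^n=\langle\,\cdot\,,1\rangle\varrho_0+\mathcal Z_\beta^n$ with $\mathcal Z_\beta$ of spectral radius $<1$ on $\mathrm{BV}$, so $I-\mathcal Z_\beta^2$ is invertible there. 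Given an arbitrary $f_2\in\mathrm{BV}([-\beta,\beta]\setminus[-1,1])$, the right-hand side of condition~(i) in Proposition~\ref{prop30} lies in $\mathrm{BV}$ and has mean zero, so one can solve for $f_1$ via $(I-\mathcal Z_\beta^2)^{-1}$; then $f_3$ is determined by condition~(ii). This yields an extension operator $\mathcal B:\mathrm{BV}([-\beta,\beta]\setminus[-1,1])\to\mathcal F_\beta^\perp$ which is the identity on the gap interval, hence has infinite-dimensional range. The threshold $\beta=1$ is sharp precisely because the gap interval collapses there, not because of any change from conservative to dissipative behaviour.
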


Let $\mathcal M_\beta$ be the subspace of all linear span of the functions 
$\{e_n(x):=e^{\pi inx};~n\in\mathbb Z\}\cup\{e^{\beta}_n(x):
=e^{\pi in\beta/x};~n\in\mathbb Z\}$ in $L^\infty(\mathbb R),$  
where $\beta$ is a positive real. The codimension of the weak-star closure 
of $\mathcal M_\beta$ in $L^\infty(\mathbb R)$ is the dimension of its 
pre-annihilator space  
\[\mathcal M_\beta^\perp:=\left\{f\in L^1(\mathbb R)~:~\int_\mathbb R f(x)e_n(x)dx=
\int_\mathbb R f(x)e^{\beta}_n(x)dx=0~\text{for~all}~n\in\mathbb Z\right\}.\]
By dual formulation, Theorem \ref{th1} is equivalent to the following 
density result. 
\begin{theorem}\cite{HR}\label{th3}
The space $\mathcal M_\beta$ is weak-star dense in $L^\infty(\mathbb R)$ 
if and only if $0<\beta\leq{1}.$
\end{theorem}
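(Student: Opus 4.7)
The plan is to deduce Theorem \ref{th3} from Theorem \ref{th1} via the standard weak-star duality between $L^1(\mathbb R)$ and $L^\infty(\mathbb R)$. The main structural observation is that $\mathcal{AC}(\Gamma,\Lambda_\beta)$ is canonically isomorphic, as a vector space, to the pre-annihilator $\mathcal M_\beta^\perp$, so the statement reduces to a purely functional-analytic translation of Theorem \ref{th1}.

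First, I would make the identification between $\mathcal X(\Gamma)$ and $L^1(\mathbb R)$ explicit. Using the parameterization $t\mapsto(t,1/t)$ on $t\in\mathbb R\setminus\{0\}$ (which covers both branches of the hyperbola), each $\mu\in\mathcal X(\Gamma)$ corresponds to a density $g\in L^1(\mathbb R,\sqrt{1+1/t^4}\,dt)$, and setting $f(t):=g(t)\sqrt{1+1/t^4}$ yields a bijection $\mu\leftrightarrow f$ with $f\in L^1(\mathbb R)$. By the integral formula recalled just before Theorem \ref{th1}, one has $\hat\mu(\xi,\eta)=\int_{\mathbb R}f(t)e^{\pi i(\xi t+\eta/t)}\,dt$. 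Specializing $(\xi,\eta)$ to the two arms of $\Lambda_\beta$ converts the condition $\hat\mu|_{\Lambda_\beta}=0$ into $\int_{\mathbb R}f(t)e_n(t)\,dt=0$ and $\int_{\mathbb R}f(t)e^\beta_n(t)\,dt=0$ for every $n\in\mathbb Z$, which is exactly the assertion $f\in\mathcal M_\beta^\perp$.

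Next, I would invoke the standard Hahn-Banach consequence: a linear subspace $M\subseteq L^\infty(\mathbb R)$ is weak-star dense if and only if its pre-annihilator in $L^1(\mathbb R)$ is trivial. Applied to $M=\mathcal M_\beta$, this yields the equivalence between weak-star density of $\mathcal M_\beta$ and $\mathcal M_\beta^\perp=\{0\}$. Chaining equivalences then gives: $\mathcal M_\beta$ is weak-star dense in $L^\infty(\mathbb R)$ if and only if $\mathcal M_\beta^\perp=\{0\}$, if and only if $\mathcal{AC}(\Gamma,\Lambda_\beta)=\{0\}$, if and only if $\beta\le 1$ by Theorem \ref{th1}, which is the desired conclusion.

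The conceptual heavy lifting is entirely contained in Theorem \ref{th1}; for Theorem \ref{th3} itself the only real task is bookkeeping, and the main obstacle (such as it is) lies in verifying two minor points. One is that the map $\mu\mapsto f$ is a genuine bijection onto $L^1(\mathbb R)$: no mass is lost at the origin since $\Gamma$ avoids it, and the behavior at infinity is controlled since $\mu$ is finite while $\sqrt{1+1/t^4}$ is bounded for $|t|$ large. The other is that the families $\{e_n\}_{n\in\mathbb Z}$ and $\{e^\beta_n\}_{n\in\mathbb Z}$ are closed under $n\mapsto -n$, so that testing against $\mathcal M_\beta$ in the $L^1$-$L^\infty$ pairing is insensitive to whether one includes a complex conjugate, and the pre-annihilator as defined coincides with the usual annihilator arising from the duality.
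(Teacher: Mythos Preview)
Your proposal is correct and matches the paper's approach exactly: the paper does not give a separate proof of Theorem~\ref{th3} but simply remarks that, by the dual formulation, it is equivalent to Theorem~\ref{th1}, which is precisely the duality argument you have spelled out in detail.
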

Similarly, by dual formulation, Theorem \ref{th2} is equivalent to the following 
density result.
\begin{theorem}\cite{MHR}\label{th4}
$\mathcal M_\beta^\perp$ is an infinite-dimensional subspace of $L^1(\mathbb R)$ 
for $1<\beta<\infty.$ 
\end{theorem}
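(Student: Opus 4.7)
The plan is to convert the two families of orthogonality conditions defining $\mathcal M_\beta^\perp$ into a fixed-point problem for the Perron--Frobenius operator of a Gauss-type map on $(-1,1)$, and then to exhibit an infinite-dimensional kernel whenever $\beta>1$.

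\textbf{Step 1: translation via periodization.} By the Poisson summation formula, the vanishing of $\int_{\mathbb R} f(x)e^{\pi i n x}\,dx$ for every $n\in\mathbb Z$ is equivalent to the vanishing of the $2$-periodization $\sum_{k\in\mathbb Z}f(x+2k)=0$ almost everywhere. After the change of variables $y=\beta/x$, with the appropriate sign adjustment on each half-line, the second family of conditions $\int_{\mathbb R} f(x)e^{\pi in\beta/x}\,dx=0$ becomes the analogous vanishing $2$-periodization for $\tilde f(y):=(\beta/y^2)\,f(\beta/y)$. Hence $f\in\mathcal M_\beta^\perp$ if and only if both $f$ and $\tilde f$ have vanishing $2$-periodization.

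\textbf{Step 2: reduction to a transfer operator.} Parametrize $f$ by its slices $f_k(t):=f(t+2k)$, $t\in(-1,1)$, subject to the single linear relation $\sum_k f_k\equiv 0$, and record the analogous data for $\tilde f$. The inversion $y=\beta/x$, reduced modulo $2$ into $(-1,1)$, is the Gauss-type map
\[
T_\beta(x)=\frac{\beta}{x}-2\Big\lfloor\frac{\beta}{2x}+\tfrac{1}{2}\Big\rfloor,
\]
whose inverse branches are the fractional-linear maps $x\mapsto\beta/(x+2k)$ restricted to suitable sub-intervals. The inverse-branch combinatorics converts the two vanishing-periodization constraints into a single fixed-point equation $h=\mathcal L_\beta h$ in $L^1(-1,1)$, where $\mathcal L_\beta$ is the Perron--Frobenius transfer operator of $T_\beta$ and $h$ is the function on $(-1,1)$ assembled from the slices of $f$; this correspondence is linear and injective.

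\textbf{Step 3: dynamical dichotomy.} For $\beta\leq 1$ the inverse branches of $T_\beta$ tile $(-1,1)$, the system is uniquely ergodic, and the resulting one-dimensional fixed space is killed by the side constraint, recovering $\mathcal M_\beta^\perp=\{0\}$ as in Theorem \ref{th3}. For $\beta>1$, however, the inverse branches of $T_\beta$ leave uncovered gaps in $(-1,1)$: mass can be trapped, and one produces infinitely many pairwise disjoint $T_\beta$-invariant measurable subsets of positive Lebesgue measure. Each such invariant set supports a nontrivial fixed density of $\mathcal L_\beta$, and pulling these back through Steps 1--2 gives a linearly independent sequence inside $\mathcal M_\beta^\perp$, proving the theorem.

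\textbf{Main obstacle.} The delicate point is the bookkeeping in Step 2: tracking how the inverse-branch structure of $T_\beta$ marries the two periodization identities, verifying that the formal identification with $h=\mathcal L_\beta h$ preserves $L^1$-integrability in both directions, and confirming that the dynamically produced fixed densities pull back to honest $L^1(\mathbb R)$ functions rather than merely formal sums. Once the correspondence is established, the infinite-dimensionality is a direct consequence of the non-ergodic behavior of $T_\beta$ for $\beta>1$.
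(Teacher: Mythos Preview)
Your Step~3 contains a genuine error that undermines the entire strategy. You claim that for $\beta>1$ the Gauss-type map $T_\beta$ on $(-1,1)$ is non-ergodic, admitting ``infinitely many pairwise disjoint $T_\beta$-invariant measurable subsets of positive Lebesgue measure,'' and that the infinite-dimensionality of $\mathcal M_\beta^\perp$ arises from the resulting multiplicity of fixed densities of $\mathcal L_\beta$. This is false. In the paper's treatment (which follows \cite{MHR} and specializes to $p=1$ for Theorem~\ref{th4}), Lemma~\ref{lemma10} shows that every nonempty open subinterval of $(-1,1)$ eventually covers all of $(-1,1)$ under iteration of $U_\beta$; consequently Theorem~\ref{th13} establishes that $U_\beta$ has a \emph{unique} ergodic absolutely continuous invariant probability measure with strictly positive density, and that $1$ is a \emph{simple} eigenvalue of $\mathcal P_\beta$ on $\partial\bar{\mathbb D}$. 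So the fixed space of $\mathcal L_\beta$ is one-dimensional for $\beta>1$ just as for $\beta\leq1$, and your proposed source of infinite-dimensionality does not exist.

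The oversimplification already occurs in Step~2. The two periodization constraints do \emph{not} reduce to a homogeneous fixed-point equation $h=\mathcal L_\beta h$ on $(-1,1)$. What actually happens (Proposition~\ref{prop30} with $p=1$) is that one decomposes $f=f_1+f_2+f_3$ according to the three regions $[-1,1]$, $[-\beta,\beta]\setminus[-1,1]$, and $\mathbb R\setminus[-\beta,\beta]$; the constraints become an \emph{inhomogeneous} equation $(I-\mathcal P_\beta^2)f_1=(\text{operator})f_2$ together with $f_3$ determined by $f_1,f_2$. For $\beta\leq1$ the middle region is empty, $f_2=0$, the equation is homogeneous, and the single fixed density is ruled out by the side condition, giving $\mathcal M_\beta^\perp=\{0\}$. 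For $\beta>1$ the middle piece $f_2$ is essentially \emph{free}: the Ionescu--Tulcea--Marinescu spectral decomposition (Theorem~A) gives $\mathcal P_\beta=\langle\,\cdot\,,1\rangle\varrho_0+\mathcal Z_\beta$ with $\mathcal Z_\beta$ of spectral radius $<1$ on $\mathrm{BV}$, so $(I-\mathcal Z_\beta^2)^{-1}$ exists and one solves for $f_1$ given any $f_2\in\mathrm{BV}([-\beta,\beta]\setminus[-1,1])$; this is the construction of the extension operator $\mathcal B$ in Theorem~\ref{th14}. The infinite-dimensionality thus comes from the infinite-dimensional freedom in $f_2$ combined with the spectral gap, not from any breakdown of ergodicity.
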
 

\noindent Next, we consider a rational perturbation of the lattice-cross $\Lambda_\beta,$ 
namely that   
\begin{equation}\label{eq02}
\Lambda_\beta^\theta:=\left((\mathbb Z+\{\theta\})\times\{0\}\right)\cup\left(\{0\}\times\beta\mathbb Z\right),
\end{equation}
where $\theta=1/{p},~\text{for some}~{p}\in\mathbb N,$ and $\beta$ is a positive real. 
Let $\Gamma$ be the hyperbola $xy=1,$ then the following result shows that $\left(\Gamma,\Lambda_\beta^\theta\right)$ is a Heisenberg uniqueness pair for $0<\beta\leq{p}.$ In other words, $\Lambda :=\Lambda_\beta^\theta$ 
is a \textit{Fourier uniqueness set} for the hyperbola $\Gamma$ whenever $\beta\leq p.$ 

\begin{theorem}\cite{GRA}\label{th5}
$\mathcal{AC}\left(\Gamma,\Lambda_\beta^\theta\right)=\{0\}$ if and only if $0<\beta\leq{p}.$
\end{theorem}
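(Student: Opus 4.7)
The plan is to adapt the transfer-operator/dynamical-systems approach of Hedenmalm and Montes-Rodr\'iguez \cite{HR} to the rationally perturbed lattice $\Lambda_\beta^\theta$. By the dual formulation of HUP, $\mathcal{AC}(\Gamma,\Lambda_\beta^\theta)=\{0\}$ is equivalent to triviality of the pre-annihilator
\begin{equation*}
(\mathcal M_{\beta,\theta})^{\perp} := \left\{ f\in L^1(\mathbb R) : \int_{\mathbb R} f(t)e^{\pi i(n+\theta) t}\,dt = \int_{\mathbb R} f(t) e^{\pi i n\beta/t}\,dt = 0 \text{ for all } n \in \mathbb Z\right\}.
\end{equation*}
A standard Poisson-summation argument, combined with the change of variable $u=\beta/t$ on the hyperbolic side, converts membership of $f$ in this pre-annihilator into the pair of a.e.\ periodization identities
\begin{equation*}
\sum_{k\in \mathbb Z} f(t+2k)\, e^{2\pi i k/p} = 0, \qquad \sum_{k\in \mathbb Z} \frac{1}{(t+2k)^2}\, f\!\left(\frac{\beta}{t+2k}\right)=0.
\end{equation*}

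For the easier \emph{nonuniqueness} direction $\beta>p$, I would use a simple dilation. If $h\in L^1(\mathbb R)$ lies in the unperturbed pre-annihilator $\mathcal M_{\beta/p}^{\perp}$, then setting $f(t):=h(t/p)$ and substituting $t=ps$ in both orthogonality conditions shows directly that $f\in (\mathcal M_{\beta,\theta})^{\perp}$: the first integral becomes $p\int h(s)e^{\pi i(np+1)s}\,ds$ which vanishes since $np+1\in\mathbb Z$, and the second becomes $p\int h(s)e^{\pi i n(\beta/p)/s}\,ds$ which vanishes by the second defining property of $h$. Because $\beta/p>1$, Theorem \ref{th4} supplies a family of such $h$, and hence such $f$, which yields nonuniqueness (this construction is the seed for Theorem \ref{th6}(a)).

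For the \emph{uniqueness} direction $\beta\le p$, I would encode the two periodization identities as a single fixed-point equation $P_{\beta,\theta}f=f$ for a twisted Perron--Frobenius operator associated with a Gauss-type map on an interval, the twist being the character $k\mapsto e^{2\pi i k/p}$ of $\mathbb Z/p\mathbb Z$ coming from the first identity. Decomposing $f$ along these characters should reduce the problem to a family of scalar fixed-point problems to which the ergodic-theoretic analysis of \cite{HR} applies, with the critical threshold shifted from $1$ to $p$; heuristically, the $p$-fold character decomposition replaces $\beta$ by $\beta/p$ in each isotypic component, placing each piece in the HR uniqueness regime precisely when $\beta\le p$. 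The boundary case $\beta=p$ would then be handled separately by a weak-compactness / limiting argument ruling out nontrivial absolutely continuous invariant measures, paralleling the boundary case of Theorem \ref{th3}.

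The main obstacle will be this last step: verifying that the twisted Perron--Frobenius operator inherits the ergodic and spectral properties of its untwisted counterpart, and checking that the character-isotypic decomposition along the additive translations $t\mapsto t+2/p$ is compatible with the second, hyperbolic periodization identity, which is nonlinear in $t$ and does not respect those translations in any obvious way. Once this compatibility is in place, the critical parameter $\beta=p$ should emerge naturally from the $p$-fold unfolding, and uniqueness in the full range $\beta\le p$ will follow in parallel to the argument for Theorem \ref{th3}.
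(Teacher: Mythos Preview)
Theorem~\ref{th5} is quoted from \cite{GRA} and is not proved in the present paper; the only information given here about its proof is the remark that it ``works along the same lines as in \cite{HR}'' and, more concretely, the description in the remark following Proposition~\ref{prop30}: one works on the interval $[-p,p]$ with the rescaled Gauss-type map $U_\beta(x)=p\{-\beta/x\}_2$, forms the associated (weighted, subprobability) Perron--Frobenius operator $\mathcal P_\beta$ on $L^1([-p,p])$, and shows that for $0<\beta\le p$ no $\lambda\in\partial\bar{\mathbb D}$ is an eigenvalue.  There is no character decomposition and no reduction to the $p=1$ case; the parameter $p$ enters only through the length of the base interval.

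Your dilation argument for the nonuniqueness direction $\beta>p$ is correct and is genuinely simpler than what the paper does: the paper obtains that direction (in the stronger infinite-dimensional form, Theorem~\ref{th6}/\ref{th8}) by rebuilding the entire Perron--Frobenius machinery on $[-p,p]$, whereas your map $h\mapsto f(t)=h(t/p)$ embeds $\mathcal M_{\beta/p}^\perp$ into $\mathcal F_\beta^\perp$ in one line and then invokes Theorem~\ref{th4}.  This is a cleaner route to both the ``only if'' half of Theorem~\ref{th5} and to Theorem~\ref{th6}.

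For the uniqueness direction $\beta\le p$, however, your sketch has a real gap, and it is essentially the one you flag yourself.  The first periodization identity $\sum_k f(t+2k)e^{2\pi ik/p}=0$ is a \emph{single} constraint: it kills exactly one isotypic component of $f$ under the $\mathbb Z/p\mathbb Z$-action by shifts of $2$ on $(-p,p]$, not all $p$ of them.  So the heuristic ``decompose into characters and get $p$ copies of the HR problem with parameter $\beta/p$'' does not go through as stated---the remaining $p-1$ components are unconstrained by the first identity, and the hyperbolic periodization mixes them.  Equivalently, under your dilation the condition on $h(s)=f(ps)$ is that $\int h(s)e^{\pi ims}\,ds=0$ only for $m\equiv 1\pmod p$, not for all $m\in\mathbb Z$, so you do not land in $\mathcal M_{\beta/p}^\perp$.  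The approach in \cite{GRA}, as summarized after Proposition~\ref{prop30}, avoids this entirely by never decomposing: it keeps the full interval $[-p,p]$, so that the twisted $2$-periodicity becomes honest $2p$-periodicity of $e^{-\pi ix/p}f(x)$, and runs the HR ergodic argument directly for the transfer operator of $U_\beta$ on that interval.  The threshold $\beta=p$ then appears not from a $p$-fold unfolding but from the expansiveness estimate $|U_\beta'(x)|=p\beta/x^2\ge \beta/p$ on $(-p,p)$.
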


\begin{remark}
\begin{enumerate}[(a)]
\item It is rather surprising that the condition on $\beta$ depends on $\theta.$ The proof of 
Theorem \ref{th5} works along the same lines as in \cite{HR} but with moderate modifications 
at appropriate places. The question is remains open for irrational values of $\theta.$   
\smallskip

\item The notion of Heisenberg uniqueness pair may be extended for more general finite complex-valued 
Borel measures $\mu$ in $\mathbb R^2$ which are supported on $\Gamma$ without assuming absolute continuity 
with respect to the arc length measure on $\Gamma.$ But for Theorem \ref{th5}, the measures $\mu$ must be 
absolutely continuous with respect to the arc length measure on the hyperbola, without this assumption, 
Theorem \ref{th5} is not true.
\end{enumerate}
\end{remark}

Next, we state a result of this paper which is a variant of Theorem \ref{th2}.

\begin{theorem}\label{th6}
Let $\Gamma$ be the hyperbola $xy=1$ and $\Lambda_\beta^\theta$ be the lattice-cross 
defined in (\ref{eq02}). Then $\mathcal{AC}\left(\Gamma,\Lambda_\beta^\theta\right)$ is 
infinite-dimensional for $\beta>p.$ 
\end{theorem}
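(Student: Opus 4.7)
The plan is to reduce Theorem~\ref{th6} directly to the unperturbed result Theorem~\ref{th4} by a single dilation of the real line, exploiting the fact that $\theta = 1/p$ has integer denominator $p$. Parameterizing $\Gamma$ by $t\mapsto(t,1/t)$ and writing $d\mu = f(t)\,dt$ with $f\in L^1(\mathbb R)$, the space $\mathcal{AC}(\Gamma,\Lambda_\beta^\theta)$ is identified, via the Fourier transform~(\ref{eq01}), with the pre-annihilator
\[
\mathcal{M}_\beta^{\theta,\perp} := \Bigl\{f\in L^1(\mathbb R): \int_{\mathbb R} f(t)\,e^{\pi i(n+1/p)t}\,dt = \int_{\mathbb R} f(t)\,e^{\pi i m\beta/t}\,dt = 0 \text{ for all } n,m\in\mathbb Z\Bigr\}.
\]
The task is to exhibit an infinite-dimensional subspace of $\mathcal{M}_\beta^{\theta,\perp}$.

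The key step is a dilation trick. Given $\tilde f\in L^1(\mathbb R)$, set $f(t):=\tilde f(t/p)$, which again lies in $L^1(\mathbb R)$. A routine change of variables $u = t/p$ yields
\[
\int f(t)\,e^{\pi i(n+1/p)t}\,dt = p\int \tilde f(u)\,e^{\pi i(np+1)u}\,du, \qquad \int f(t)\,e^{\pi i m\beta/t}\,dt = p\int \tilde f(u)\,e^{\pi i m\tilde\beta/u}\,du,
\]
with $\tilde\beta := \beta/p$. In the first identity the frequency $np+1$ ranges over a subset of $\mathbb Z$ as $n$ varies over $\mathbb Z$; in the second, the rescaled parameter $\tilde\beta$ takes the role of $\beta$. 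Hence whenever $\tilde f$ lies in the unperturbed pre-annihilator $\mathcal{M}_{\tilde\beta}^\perp$ defined in the introduction, both vanishing conditions for $f$ hold automatically, and the dilation $\tilde f\mapsto\tilde f(\cdot/p)$ is a well-defined, manifestly injective linear map $\mathcal{M}_{\tilde\beta}^\perp\hookrightarrow\mathcal{M}_\beta^{\theta,\perp}$. Finally, the hypothesis $\beta>p$ is precisely $\tilde\beta>1$, so Theorem~\ref{th4} guarantees that $\mathcal{M}_{\tilde\beta}^\perp$ is infinite-dimensional, and its injective image in $\mathcal{M}_\beta^{\theta,\perp}$ is therefore infinite-dimensional, proving Theorem~\ref{th6}.

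There is essentially no obstacle along this path: the entire argument rests on a single substitution followed by a citation of Theorem~\ref{th4}. A more self-contained route, closer in spirit to the Perron--Frobenius operator framework of \cite{MHR}, would be to introduce a twisted transfer operator carrying the character $k\mapsto e^{2\pi ik/p}$ coming from the rational shift in the first periodization, and to adapt the spectral/dynamical analysis of that reference to this twisted operator. In that incarnation the technical heart would be verifying that the dissipativity threshold shifts from $\beta>1$ to the sharp value $\beta>p$, matching Theorem~\ref{th5}; the dilation argument above already makes that shift transparent, so both viewpoints deliver the same conclusion.
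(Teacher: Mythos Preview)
Your argument is correct and considerably more economical than the paper's. The paper does not use a dilation reduction; instead it reproves the analogue of Theorem~\ref{th4} from scratch on the dilated interval $(-p,p]$: it introduces the Gauss-type map $U_\beta(x)=p\{-\beta/x\}_2$, verifies the hypotheses of the Ionescu-Tulcea--Marinescu spectral theorem for the associated Perron--Frobenius operator $\mathcal P_\beta$, proves via an iterate-of-intervals lemma (Lemma~\ref{lemma10}) that the absolutely continuous invariant measure is unique, ergodic, and has strictly positive density (Theorem~\ref{th13}), and finally builds an explicit extension operator $\mathcal B:\text{BV}([-\beta,\beta]\setminus[-p,p])\to\mathcal F_\beta^\perp$ with infinite-dimensional range (Theorem~\ref{th14}). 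Your substitution is essentially the invariance property~(\textit{inv}-2) applied to the diagonal map $T=\mathrm{diag}(p,1/p)$, noting that $T^{-1}\Gamma=\Gamma$ and $T^\ast\Lambda_\beta^\theta\subset\Lambda_{\beta/p}$; once Theorem~\ref{th4} is available as a black box this is immediate. What the paper's longer route buys is extra structure---the extension operator shows the infinite-dimensional subspace can be taken inside the weighted space $L^2(\mathbb R,(1+x^2)dx)$---and a framework that in principle does not rely on $\theta$ being rational; your dilation trick is specific to $\theta=1/p$ and, since $\{np+1:n\in\mathbb Z\}\subsetneq\mathbb Z$ for $p\ge2$, it cannot be reversed to recover the uniqueness direction (Theorem~\ref{th5}).
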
 

\begin{remark}
\begin{enumerate}[(a)]
\item Theorem \ref{th6} asserts that $\Lambda :=\Lambda_\beta^\theta$ is a \textit{Fourier nonuniqueness 
set} for the hyperbola $xy=1$ whenever $\beta>p.$ The presence of $\theta$ showing up in the condition 
of $\beta$ which is somewhat unexpected. The proof of Theorem \ref{th6} works along the same lines as 
in \cite{MHR} but with modifications at appropriate places.
\smallskip

\item As a corollary to Theorem \ref{th6}, let $\Gamma$ be the hyperbola $xy=1$ and $\Lambda_\beta^\zeta$ 
be the set $\Lambda_\beta^\zeta:=\left((\mathbb Z+\{\zeta\})\times\{0\}\right)\cup\left(\{0\}\times\beta\mathbb Z\right),$ where $\zeta=r/p,~\text{for some}~p\in\mathbb N~\text{and}~r\in\mathbb Z$ with 
${\text{gcd(p,r)=1}}$ and $\beta$ is a positive real. Then $\mathcal{AC}\left(\Gamma,\Lambda_\beta^\zeta\right)$ 
is infinite-dimensional for $\beta>p.$
\smallskip

\item Let $\Gamma$ be the hyperbola $xy=1,$ then any $\mu\in\mathcal{AC}\left(\Gamma,\Lambda_\beta^\theta\right),$ $u:=\hat\mu$ is a solution of the one-dimensional Klein-Gordon equation: $\left(\partial_\xi\partial_\eta+\pi^2\right)u(\xi,\eta)=0$ in the sense of distributions. Theorem \ref{th6} says that for $\beta>p,$ 
the solution space of the above partial differential equation is infinite-dimensional.
\end{enumerate}
\end{remark} 
\smallskip

Let $\mathcal F_\beta$ be the subspace of all linear span of the functions $e^{p}_n(x),e^{\beta}_n(x);~n\in\mathbb Z$ in $L^\infty(\mathbb R),$ where $e^{p}_n(x):=e^{\pi i(n+1/p)x}$ and $e^{\beta}_n(x):=e^{\pi in\beta/x}$ 
with $p\in\mathbb N$ and $\beta$ is a positive real. The codimension of the weak-star closure of 
$\mathcal F_\beta$ in $L^\infty(\mathbb R)$ is the dimension of its pre-annihilator space 
$\mathcal F_\beta^\perp.$ By dual formulation, Theorem \ref{th5} is equivalent to the following result.

\begin{theorem}\cite{GRA}\label{th7}
The space $\mathcal F_\beta$ is weak-star dense in $L^\infty(\mathbb R)$ 
if and only if $0<\beta\leq{p}.$
\end{theorem}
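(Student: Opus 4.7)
The plan is to derive Theorem~\ref{th7} directly from Theorem~\ref{th5} via the standard Hahn--Banach duality between weak-star density in $L^\infty(\mathbb{R})$ and triviality of the pre-annihilator in $L^1(\mathbb{R})$. The argument is essentially a translation of the measure-theoretic statement into its dual functional-analytic form.

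First I would identify $\mathcal{X}(\Gamma)$ with $L^1(\mathbb{R})$ through the parametrization $t\mapsto(t,1/t)$ of the hyperbola $\Gamma=\{xy=1\}$. As recalled in the introduction, every $\mu\in\mathcal{X}(\Gamma)$ corresponds uniquely to an $f\in L^1(\mathbb{R})$ via $f(t)=g(t)\sqrt{1+1/t^4}$, and then
\[\hat\mu(\xi,0)=\int_{\mathbb{R}\setminus\{0\}}e^{\pi i\xi t}f(t)\,dt,\qquad \hat\mu(0,\eta)=\int_{\mathbb{R}\setminus\{0\}}e^{\pi i\eta/t}f(t)\,dt.\]
Consequently the defining conditions of $\mathcal{AC}(\Gamma,\Lambda_\beta^\theta)$, namely $\hat\mu(n+1/p,0)=0$ and $\hat\mu(0,n\beta)=0$ for every $n\in\mathbb{Z}$, translate line by line into $\int_{\mathbb{R}} f\,e_n^p\,dx = \int_{\mathbb{R}} f\,e_n^\beta\,dx = 0$ for every $n\in\mathbb{Z}$, i.e.\ $f\in\mathcal{F}_\beta^\perp$. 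This establishes a linear bijection $\mathcal{AC}(\Gamma,\Lambda_\beta^\theta)\cong\mathcal{F}_\beta^\perp$.

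Next I would invoke the classical duality result that, for a subspace $V$ of the dual $X^\ast$ of a Banach space $X$, the weak-star closure of $V$ equals $(V_\perp)^\perp$; in particular $V$ is weak-star dense in $X^\ast$ if and only if $V_\perp=\{0\}$. Applied with $X=L^1(\mathbb{R})$, $X^\ast=L^\infty(\mathbb{R})$, and $V=\mathcal{F}_\beta$, this yields that $\mathcal{F}_\beta$ is weak-star dense in $L^\infty(\mathbb{R})$ if and only if $\mathcal{F}_\beta^\perp=\{0\}$. Combining with the bijection of the previous paragraph and with Theorem~\ref{th5} then gives the desired equivalence: $\mathcal{F}_\beta$ is weak-star dense in $L^\infty(\mathbb{R})$ if and only if $0<\beta\leq p$.

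There is essentially no conceptual obstacle here, since the argument is a pure reformulation. The only points requiring care are that each of $e_n^p$ and $e_n^\beta$ genuinely lies in $L^\infty(\mathbb{R})$ (these are bounded measurable functions, and the omission of the measure-zero set $\{0\}$ in the integrals is harmless for the $L^1$--$L^\infty$ pairing), and that the parametrization is tracked so that the two families of vanishing conditions on $\hat\mu$ correspond exactly to the two families of integral conditions defining $\mathcal{F}_\beta^\perp$. Once this bookkeeping is done, Theorem~\ref{th5} closes the loop.
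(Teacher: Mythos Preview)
Your proposal is correct and follows exactly the route the paper indicates: the paper simply asserts that Theorem~\ref{th7} is equivalent to Theorem~\ref{th5} ``by dual formulation,'' and you have spelled out that duality (identifying $\mathcal{AC}(\Gamma,\Lambda_\beta^\theta)$ with $\mathcal F_\beta^\perp$ via the parametrization of $\Gamma$, then applying Hahn--Banach) in full detail. There is nothing to add.
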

Similarly, Theorem \ref{th6} is equivalent to the following density result.
\begin{theorem}\label{th8}
$\mathcal F_\beta^\perp$ is an infinite-dimensional subspace of 
$L^1(\mathbb R)$ for $p<\beta<\infty.$ 
\end{theorem}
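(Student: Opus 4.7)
The plan is to adapt the Perron-Frobenius (transfer) operator approach of \cite{MHR} to accommodate the rational shift $\theta = 1/p$. The starting point is to translate the defining vanishing integrals of $\mathcal F_\beta^\perp$ into pointwise periodization identities. Splitting $\mathbb R$ into the intervals $[2k,2k+2]$ and using completeness of $\{e^{\pi i n y}\}_{n \in \mathbb Z}$ on $[0,2]$, the condition $\int_{\mathbb R} f(x) e^{\pi i(n+1/p)x}\, dx = 0$ for all $n \in \mathbb Z$ is equivalent to
\[
\sum_{k \in \mathbb Z} f(x+2k)\,\omega^k \;=\; 0 \quad \text{for a.e. } x \in \mathbb R,
\]
where $\omega = e^{2\pi i/p}$. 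Thus the sole effect of the shift $\theta=1/p$ is to twist the ordinary $2$-periodization by powers of the $p$-th root of unity $\omega$. The second family of conditions, after the substitution $t = \beta/x$, reduces to the untwisted periodization identity $\sum_{k \in \mathbb Z} h(t+2k) = 0$ a.e., where $h(t) = (\beta/t^2) f(\beta/t)$.

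Next, I would encode the pair of identities as a fixed-point equation for a modified Perron-Frobenius operator $L_{\beta,\theta}$ associated to a Gauss-type map, following \cite{MHR}. There, the composition of the inversion $t\mapsto\beta/t$ with the folding of the line onto $[0,2]$ yields a hyperbolic map $T_\beta$ whose invariant densities are in bijection with elements of $\mathcal M_\beta^\perp$. In the present setting, the $\omega^k$-twist in the first periodization modifies the transfer operator by a multiplicative factor: one may work directly with a twisted $L_{\beta,\theta}$ on the twisted periodization class, or equivalently rescale the real variable by $p$ so that the ordinary periodization now lives on $[0,2p]$ while the inversion becomes $t \mapsto \beta/(pt)$. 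Either viewpoint reveals that the relevant Gauss-type dynamics is conjugate, up to the root-of-unity twist, to the $\beta/p$ case of \cite{MHR}, which is the mechanism by which the critical threshold is lifted from $\beta > 1$ to $\beta > p$.

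To produce infinitely many linearly independent fixed points of $L_{\beta,\theta}$ under the assumption $\beta > p$, I would mirror \cite{MHR}: once the parameter exceeds the critical threshold, the associated Gauss-type map possesses wandering intervals, and on each such interval one constructs an invariant density by distributing mass along a selected backward orbit. Distinct wandering intervals, or distinct distributions on a single wandering interval, yield linearly independent invariant densities, and pulling these back through the inversion and the twisted periodization produces the required infinite-dimensional subspace of $\mathcal F_\beta^\perp$.

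The principal obstacle is the initial reformulation: isolating the correct twisted Gauss-type map and its Perron-Frobenius operator, and verifying that the $\omega^k$-twisted periodization is compatible with the dynamical decomposition so that fixed points of $L_{\beta,\theta}$ correspond to genuine elements of $\mathcal F_\beta^\perp$. Once this bookkeeping is settled, the subsequent construction of wandering intervals and invariant densities parallels \cite{MHR} with only notational modifications.
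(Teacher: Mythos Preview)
Your opening reformulation is correct: the vanishing of $\int f(x)e^{\pi i(n+1/p)x}\,dx$ for all $n$ is equivalent to the twisted periodization $\sum_k \omega^k f(x+2k)=0$, and the general strategy of encoding both families of conditions via a Gauss-type transfer operator in the spirit of \cite{MHR} is the right one. The rescaling observation that the effective parameter becomes $\beta_0=\beta/p$ is also on target.

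The genuine gap is in your mechanism for infinite-dimensionality. You assert that for $\beta>p$ the Gauss-type map ``possesses wandering intervals'' and that one builds invariant densities by distributing mass along backward orbits on those intervals. This is not what happens, either in \cite{MHR} or here. The relevant map $U_\beta(x)=p\{-\beta/x\}_2$ on $(-p,p]$ is uniformly expanding for $\beta>p$ and is in fact topologically mixing: every nonempty open subinterval eventually covers $(-p,p)$ under iteration (this is exactly Lemma~\ref{lemma10}). Consequently the Perron-Frobenius operator $\mathcal P_\beta$ has $1$ as a \emph{simple} eigenvalue and no other peripheral spectrum; the absolutely continuous invariant measure is unique (Theorem~\ref{th13}). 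There are no wandering intervals and no family of distinct invariant densities to exploit.

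The actual source of infinite-dimensionality is different. The condition $\beta>p$ creates a nontrivial ``gap'' interval $[-\beta,\beta]\setminus[-p,p]$ on which the restriction $f_2$ of $f$ can be prescribed freely (in $\mathrm{BV}$). Proposition~\ref{prop30} shows that membership in $\mathcal F_\beta^\perp$ is equivalent to two linear equations determining $f_1=f|_{[-p,p]}$ and $f_3=f|_{\mathbb R\setminus[-\beta,\beta]}$ from $f_2$; the first reads $(\mathrm I-\mathcal P_\beta^2)f_1=(\text{known function of }f_2)$. The spectral decomposition of Theorem~A together with the simplicity of the eigenvalue $1$ yields $\mathcal P_\beta^2=\langle\,\cdot\,,1\rangle\varrho_0+\mathcal Z_\beta^2$ with $\mathcal Z_\beta$ of spectral radius $<1$ on $\mathrm{BV}$. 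One checks that the right-hand side has mean zero, so the equation reduces to $(\mathrm I-\mathcal Z_\beta^2)f_1=(\cdots)$, which is solvable because $\mathrm I-\mathcal Z_\beta^2$ is invertible on $\mathrm{BV}$. This produces a bounded linear extension operator $\mathcal B:\mathrm{BV}([-\beta,\beta]\setminus[-p,p])\to\mathcal F_\beta^\perp$ that restricts to the identity on the gap interval; its range is therefore infinite-dimensional. You should replace the wandering-interval heuristic with this spectral-gap/extension argument.
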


\subsection{Nonuniqueness sets for the branch of the hyperbola.}  
Let $\Gamma_+=\{(x,y)\in\mathbb R^2 : ~xy=1,~x>0\}$ be the branch of the hyperbola 
and $\mu\in \mathcal{X}(\Gamma_+),$ then there exists $g\in L^1(\mathbb R_+, \sqrt{1+1/t^4}~dt)$ 
such that for bounded and continuous function $\varphi$ on $\mathbb R^2,$
\[\int_{\Gamma_+}\varphi(x,y)d\mu(x,y)=\int_{\mathbb R_+\setminus\{0\}}\varphi(t,1/t)g(t)\sqrt{1+1/t^4}dt.\] 
In particular, for $(\xi,\eta)\in\mathbb R^2,$ the Fourier transform of $\mu$ can be expressed as 
\[\hat\mu{(\xi,\eta)}=\int_{\mathbb R_+\setminus\{0\}} e^{\pi i(\xi t+ \eta/t)}f(t)dt,~~ 
\text{where}~f(t):=g(t)\sqrt{1+1/t^4}~\in L^1(\mathbb R_+).\]   
In \cite{MHR}, Canto-Mart\'in, Hedenmalm and Montes-Rodr\'iguez have studied that some lattice-cross 
in $\mathbb R^2,$ is a \textit{Fourier nonuniqueness set} for $\Gamma_+$  
and have proved the following result.

\begin{theorem}\cite{MHR}\label{th9}
Let $\Gamma_+=\{(x,y)\in\mathbb R^2 : ~xy=1,~x>0\}$ be the branch of the hyperbola and $\Lambda_\gamma$ be the lattice-cross $\Lambda_{\gamma}:=\left(2\mathbb Z\times\{0\}\right)\cup\left(\{0\}\times2\gamma\mathbb Z\right),$ 
where $\gamma$ is a positive real. Then $\mathcal{AC}(\Gamma_+,\Lambda_\gamma)$ is infinite-dimensional for 
$\gamma>1.$ 
\end{theorem}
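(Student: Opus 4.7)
The plan is to use the dual formulation. Parameterizing $\Gamma_+$ by $t>0$, any $\mu\in\mathcal X(\Gamma_+)$ has a density $f\in L^1(\mathbb R_+)$ with $\hat\mu(\xi,\eta)=\int_0^\infty e^{\pi i(\xi t+\eta/t)}f(t)\,dt$, and the vanishing of $\hat\mu$ on $\Lambda_\gamma$ is equivalent to the two systems
\[\int_0^\infty e^{2\pi int}f(t)\,dt=0,\qquad \int_0^\infty e^{2\pi in\gamma/t}f(t)\,dt=0,\qquad n\in\mathbb Z.\]
Thus I need to exhibit an infinite-dimensional subspace of $L^1(\mathbb R_+)$ annihilated by both families of exponentials. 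Using Fourier series on the one-dimensional torus, the first family is equivalent to the vanishing a.e.\ on $(0,1)$ of the one-sided periodization $Pf(x):=\sum_{k=0}^\infty f(x+k)$; the substitution $s=\gamma/t$ converts the second family into the analogous vanishing of $P(If)$, where $If(s):=\gamma f(\gamma/s)/s^2$ is the weighted inversion. The task reduces to exhibiting infinitely many linearly independent $f\in L^1(\mathbb R_+)$ with $Pf=0$ and $P(If)=0$ on $(0,1)$.

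To construct such $f$'s I would follow the Perron--Frobenius strategy of \cite{MHR}. Introduce the Gauss-type map $T_\gamma:(0,1)\to(0,1)$, $T_\gamma(x):=\{\gamma/x\}$, together with its transfer operator
\[L_\gamma h(y)=\sum_{k:\,\gamma/(y+k)\in(0,1)}\frac{\gamma}{(y+k)^2}\,h\!\left(\frac{\gamma}{y+k}\right)\]
acting on $L^1(0,1)$. The two periodization conditions can be packaged as a single invariance equation relating $L_\gamma$, the integer shift on $\mathbb R_+$, and the weighted inversion $I$. Concretely, starting from a datum $h$ on $(0,1)$, I would lift it back to $\mathbb R_+$ by a telescoping series indexed by the backward orbits of $T_\gamma$, with weights dictated by the Jacobian of the inversion; the conditions $Pf=0$ and $P(If)=0$ then hold by construction, provided the series converges in $L^1(\mathbb R_+)$.

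The main obstacle, and the crux of the argument, is to show that the space of admissible data $h$ is infinite-dimensional precisely when $\gamma>1$. The value $\gamma=1$ is the critical threshold for the dynamics of $T_\gamma$: for $\gamma\leq 1$ the map is a scaled variant of the classical Gauss map with a unique invariant density, and $L_\gamma$ contracts on the hyperplane of zero-mean functions, which collapses the solution space and yields the complementary Fourier uniqueness statement; for $\gamma>1$, by contrast, the inverse branches $\gamma/(y+k)$ no longer tile $(0,1)$ and $T_\gamma$ develops a wandering sub-interval on which the forward dynamics fails to mix, so one may prescribe arbitrary $L^1$-data there, each such datum propagating by the telescoping construction to a distinct element of the preannihilator. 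Making this rigorous requires two delicate analytical inputs: first, an $L^1$-convergence estimate for the telescoping series, amounting to control of the iterates of $L_\gamma$ on a suitable invariant cone; second, an injectivity statement for the lift from interval data to $L^1(\mathbb R_+)$-solutions, ensuring that distinct $h$'s yield linearly independent $f$'s. These estimates parallel those of \cite{MHR} for the full hyperbola, with moderate simplifications afforded by working on the single branch $\Gamma_+$.
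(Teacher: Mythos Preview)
Your dual reformulation, the introduction of the Gauss-type map $T_\gamma(x)=\{\gamma/x\}$ on $(0,1)$, and the associated transfer operator $L_\gamma$ all match the set-up of \cite{MHR} (and of the present paper's proof of the perturbed version, Theorem~\ref{th19} with $q=1$). The gap is in your diagnosis of \emph{where} the infinite-dimensional freedom comes from and, correspondingly, in the construction you sketch.

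You assert that for $\gamma>1$ the map $T_\gamma$ ``develops a wandering sub-interval on which the forward dynamics fails to mix.'' This is not correct: $T_\gamma$ on $(0,1)$ remains a (partially filling) uniformly expanding piecewise $\mathcal C^2$ map, and the analogue of Lemma~\ref{lemma10} shows that every nonempty open subinterval is eventually mapped onto $(0,1)$; in particular the unique absolutely continuous invariant probability has strictly positive density, the system is ergodic, and there is no wandering piece inside $(0,1)$ to exploit. The $\gamma\le 1$ versus $\gamma>1$ dichotomy is \emph{not} a mixing/non-mixing dichotomy for $T_\gamma$.

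The actual source of freedom lives on $\mathbb R_+$, not inside the unit interval: one decomposes $f=f_1+f_2+f_3$ with $f_1\in L^1([0,1])$, $f_2\in L^1([1,\gamma])$, $f_3\in L^1([\gamma,\infty))$, and shows (Proposition~\ref{prop35} with $q=1$) that $f$ lies in the pre-annihilator iff
\[
(\mathrm{I}-\mathcal P_\gamma^2)f_1=\text{(explicit bounded operator)}[f_2],\qquad f_3=\text{(explicit bounded operator)}[f_1,f_2].
\]
The buffer interval $[1,\gamma]$ is nonempty precisely when $\gamma>1$, and $f_2$ is the free datum. To solve for $f_1$ one cannot simply invert $\mathrm I-\mathcal P_\gamma^2$ on $L^1$ (since $1\in\sigma(\mathcal P_\gamma)$); the paper instead invokes the Ionescu--Tulcea--Marinescu spectral decomposition on $\mathrm{BV}([0,1])$: by Theorem~A and the analogue of Theorem~\ref{th13}, $\mathcal P_\gamma=\langle\,\cdot\,,1\rangle\varrho_0+\mathcal Z_\gamma$ with $\mathcal Z_\gamma$ of spectral radius $<1$ on $\mathrm{BV}$, the right-hand side above has zero mean, and one sets $f_1:=(\mathrm I-\mathcal Z_\gamma^2)^{-1}[\cdots]\in\mathrm{BV}$. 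The resulting extension operator $\mathcal B_+:\mathrm{BV}([1,\gamma])\to L^1(\mathbb R_+)$ restricts to the identity on $[1,\gamma]$, so its range is automatically infinite-dimensional.

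Your ``telescoping over backward orbits'' with ``$L^1$-data on a wandering piece'' does not reproduce this: there is no wandering piece, and the convergence you would need is exactly the $\mathrm{BV}$ spectral gap you do not invoke. Replace the wandering-set picture by the three-interval decomposition on $\mathbb R_+$ and the $\mathrm{BV}$ spectral theory for $\mathcal P_\gamma$, and your outline becomes the paper's proof.
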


By dual formulation, Theorem \ref{th9} is equivalent to the following result.

\begin{theorem}\cite{MHR}\label{th10}
Let $\mathcal N_\gamma$ be the subspace of all linear span of the functions 
$\{e_n(x):=e^{2\pi inx};~n\in\mathbb Z\}\cup\{e^{\gamma}_n(x):
=e^{2\pi in\gamma/x};~n\in\mathbb Z\}$ in $L^\infty(\mathbb R_+),$  
where $\gamma$ is a positive real. Then the pre-annihilator space $\mathcal N_\gamma^\perp$ 
is infinite-dimensional for $\gamma>1.$
\end{theorem}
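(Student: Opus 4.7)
\medskip
The plan is to follow the dualization and dynamical-systems machinery of Canto-Mart\'in, Hedenmalm and Montes-Rodr\'iguez \cite{MHR}, converting the orthogonality to $\mathcal N_\gamma$ into a fixed-point equation for the Perron-Frobenius operator of a Gauss-type map on $(0,1)$, and then exhibiting an infinite-dimensional fixed-point subspace once $\gamma>1$.

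First I would reduce the two orthogonality conditions defining $\mathcal N_\gamma^\perp$ to periodization identities. For $f\in L^1(\mathbb R_+)$, Poisson summation applied to $f$ extended by zero gives that $\int_0^\infty f(x)e^{2\pi inx}\,dx=0$ for all $n\in\mathbb Z$ is equivalent to
\[
\sum_{k=0}^\infty f(x+k)=0\qquad\text{for a.e. }x\in(0,1).
\]
The substitution $y=1/x$ transforms the second family $\int_0^\infty f(x)e^{2\pi in\gamma/x}\,dx=0$ into
\[
\sum_{k=0}^\infty \frac{1}{(y+k/\gamma)^2}\, f\!\left(\frac{1}{y+k/\gamma}\right)=0\qquad\text{for a.e. }y\in(0,1/\gamma).
\]

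Next, I would observe that the first identity expresses $f|_{(0,1)}$ in terms of $f|_{(1,\infty)}$, while the second expresses $f|_{(\gamma,\infty)}$ in terms of $f|_{(0,\gamma)}$ after the involution $y=1/x$. Composing these two reductions produces a linear equation $\mathcal L_\gamma g=g$ on $L^1(0,1)$, with $g=f|_{(0,1)}$ and $\mathcal L_\gamma$ the Perron-Frobenius operator of a Gauss-type interval map $T_\gamma(x)=\gamma/x-\lfloor \gamma/x\rfloor$. In this way the dimension of $\mathcal N_\gamma^\perp$ is identified with the dimension of the $L^1$ fixed-point space of $\mathcal L_\gamma$. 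For $\gamma>1$, following the dynamical construction in \cite{MHR}, the map $T_\gamma$ admits disjoint (quasi-)invariant subsets of $(0,1)$---carved out by iterating a carefully chosen generating partition and exploiting the expansion of $T_\gamma$---each carrying an absolutely continuous invariant density lying in $L^1$. These densities furnish countably many linearly independent fixed points of $\mathcal L_\gamma$, and for each such $g$ the two periodization identities reconstruct an $f\in L^1(\mathbb R_+)$ annihilating both character families.

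The main obstacle is the infinite-dimensionality of $\ker(\mathcal L_\gamma-I)$ in $L^1(0,1)$ for every $\gamma>1$. This needs the delicate dynamical analysis of $T_\gamma$: the expansion of $T_\gamma$ has to be strong enough to guarantee $L^1$-summability of the lifted series on $(1,\infty)$ via uniform contraction of the inverse branches, yet the map must fail to be globally ergodic so that a countable family of mutually singular invariant densities can coexist. A secondary technical point is verifying that the lifted $f$ genuinely satisfies both periodization identities (and not merely one of them by construction), which amounts to checking that the two reductions used to produce $\mathcal L_\gamma$ are consistent on the overlap regions; this is where the restriction $\gamma>1$ (versus $\gamma=1$) provides the slack needed to decouple the two constraints.
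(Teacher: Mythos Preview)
Your proposal contains a genuine and fatal misconception about the source of the infinite-dimensionality.

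You claim that the dimension of $\mathcal N_\gamma^\perp$ equals the dimension of $\ker(\mathcal L_\gamma-I)$ in $L^1(0,1)$, and that for $\gamma>1$ the Gauss-type map $T_\gamma$ ``must fail to be globally ergodic so that a countable family of mutually singular invariant densities can coexist.'' Both assertions are false. The map $T_\gamma$ on $(0,1)$ for $\gamma>1$ is in fact a partially filling $\mathcal C^2$-smooth piecewise monotonic transform satisfying the hypotheses of the Ionescu-Tulcea--Marinescu theorem, and the analogue of Theorem~\ref{th13} in \cite{MHR} shows that $1$ is a \emph{simple} eigenvalue of the Perron--Frobenius operator and the only eigenvalue on $\partial\bar{\mathbb D}$: there is a \emph{unique} ergodic absolutely continuous invariant probability measure. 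So $\ker(\mathcal L_\gamma-I)$ is one-dimensional, not infinite-dimensional, and your proposed mechanism cannot work.

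The actual argument in \cite{MHR} (mirrored in this paper's Proposition~\ref{prop35} and Theorem~\ref{th19}) does not identify $\mathcal N_\gamma^\perp$ with a fixed-point space at all. One decomposes $f\in L^1(\mathbb R_+)$ as $f_1+f_2+f_3$ with $f_1$ supported on $[0,1]$, $f_2$ on $[1,\gamma]$, and $f_3$ on $[\gamma,\infty)$. The two periodization conditions translate into
\[
(I-\mathcal P_\gamma^2)f_1=\text{(operator applied to }f_2),\qquad f_3=\text{(operator applied to }f_1,f_2).
\]
The crucial point is that $f_2$ may be chosen \emph{freely} in $\mathrm{BV}([1,\gamma])$; the right-hand side of the first equation automatically has zero mean, so it lies in the range of $I-\mathcal Z_\gamma^2$ (where $\mathcal Z_\gamma$ is the part of $\mathcal P_\gamma$ with spectral radius $<1$), allowing one to solve for $f_1$, and then $f_3$ is determined. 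The resulting extension operator $f_2\mapsto f_1+f_2+f_3$ is injective because it restricts to the identity on $[1,\gamma]$, and $\mathrm{BV}([1,\gamma])$ is infinite-dimensional precisely when $\gamma>1$. Thus the infinite-dimensionality comes from the free data on the ``gap'' interval $[1,\gamma]$, not from any failure of ergodicity. Your reduction collapsed this three-piece structure to a single equation on $(0,1)$ and thereby lost the degrees of freedom that actually drive the result.
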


Next, we state a result of this paper which is a variant of Theorem \ref{th9}. 

\begin{theorem}\label{th11}
Let $\Gamma_+=\{(x,y)\in\mathbb R^2 : xy=1,~x>0\}$ be the branch of the hyperbola 
and $\Lambda_\gamma^\theta$ be the lattice-cross 
$\Lambda_\gamma^\theta:=\left((2\mathbb Z+\{2\theta\})\times\{0\}\right)\cup\left(\{0\}\times2\gamma\mathbb Z\right),$ where $\theta=1/q,~q\in\mathbb N,$ and $\gamma$ is a positive real.   
Then $\mathcal{AC}(\Gamma_+,\Lambda_\gamma^\theta)$ is infinite-dimensional for $\gamma>q.$ 
\end{theorem}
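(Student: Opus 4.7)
I would work on the dual side. By the density formulation recalled in the introduction, it suffices to exhibit infinitely many linearly independent $f \in L^1(\mathbb{R}_+)$ annihilating the span in $L^\infty(\mathbb{R}_+)$ of $\{e^{2\pi i(n+\theta)t}\}_{n\in\mathbb{Z}} \cup \{e^{2\pi im\gamma/t}\}_{m\in\mathbb{Z}}$, i.e.\
\[
\int_0^\infty f(t)\, e^{2\pi i(n+1/q)t}\,dt = 0 = \int_0^\infty f(t)\, e^{2\pi im\gamma/t}\,dt \qquad (n,m \in \mathbb{Z}).
\]
Poisson summation converts these into two periodization identities: the $1$-periodization of $e^{2\pi it/q}f(t)$ vanishes on $(0,1)$, equivalently
\[
\sum_{k\geq 0}\zeta^k f(x+k)=0 \quad\text{a.e. }x\in(0,1), \qquad \zeta := e^{2\pi i/q},
\]
and, after the substitution $u = 1/t$, the $\gamma$-periodization of $u^{-2}f(1/u)$ vanishes on $(0,\gamma)$.

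\textbf{Dynamical reformulation.} In the unperturbed case $\theta=0$ the MHR proof of Theorem \ref{th9} identifies this pair of identities with a fixed-point equation for a Perron--Frobenius transfer operator associated to a Gauss-type map on a bounded interval, and deduces from $\gamma>1$ that its kernel in a suitable weighted $L^1$ is infinite-dimensional. For $\theta=1/q$, I would split the first identity by residues modulo $q$ to obtain $\sum_{j=0}^{q-1}\zeta^j\,\Phi(x+j)=0$ for $x\in(0,1)$, where $\Phi$ denotes the $q$-periodization of $f$ on $(0,q)$. Rescaling $t=q\tau$ turns this $q$-periodization into a standard $1$-periodization of the rescaled density, while the second identity becomes a $(\gamma/q)$-periodization condition in the $\tau$-variable, subject to the extra DFT constraint coming from the $\zeta$-twist. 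This produces a modified transfer operator whose fixed points in an appropriate weighted $L^1$ correspond precisely to elements of the pre-annihilator.

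\textbf{Infinite-dimensional kernel and transport back.} After the rescaling, the MHR threshold $\gamma/q>1$ matches exactly our hypothesis $\gamma>q$. I would then mirror the MHR construction: exhibit a wandering set for the base Gauss-type map, apply Krylov--Bogolyubov averaging to produce invariant densities with pairwise disjoint supports inside it, and from these select infinitely many that also satisfy the $\zeta$-weighted DFT constraint. Linear independence is immediate from the support disjointness; undoing the rescaling yields infinitely many $f\in L^1(\mathbb{R}_+)$ in the pre-annihilator, hence infinitely many linearly independent measures in $\mathcal{AC}(\Gamma_+,\Lambda_\gamma^\theta)$ via the parametrization $t\mapsto(t,1/t)$.

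\textbf{Main obstacle.} The delicate point is the $\zeta$-twist: condition (i) is a $\mathbb{Z}/q\mathbb{Z}$-character-weighted periodization rather than a plain one. The challenge is to verify that this DFT constraint is compatible with the wandering-set construction, so that cutting down the MHR fixed-point space by the twist still leaves an infinite-dimensional subspace. This is precisely the "moderate modification" of MHR that the author alludes to, and it parallels the adjustment already required for the two-sided hyperbola in Theorem \ref{th6}.
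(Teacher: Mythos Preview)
Your dual reformulation and the $\zeta$-twisted periodization identity are correct starting points, but the ``Infinite-dimensional kernel'' step rests on a mischaracterization of the MHR argument, and the mechanism you propose cannot work here.

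The MHR construction (and the paper's adaptation of it) does \emph{not} proceed via wandering sets or Krylov--Bogolyubov averaging to produce many invariant densities with disjoint supports. Quite the opposite: the central ergodic lemma (the analogue of Lemma~\ref{lemma10} for $V_\gamma(x)=q\{\gamma/x\}_1$ on $[0,q)$) shows that iterates of any open subinterval eventually cover $(0,q)$, and consequently the absolutely continuous $V_\gamma$-invariant probability measure is \emph{unique} with strictly positive density. So there are no wandering sets of positive measure, and no family of invariant densities with disjoint supports to select from. Your proposed source of infinite-dimensionality therefore does not exist in this system.

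What actually produces the infinite-dimensional kernel is an \emph{extension operator}. One splits $f=f_1+f_2+f_3$ according to the intervals $[0,q]$, $[q,\gamma]$, $[\gamma,\infty)$; Proposition~\ref{prop35} shows $f\in\mathcal K_\gamma^\perp$ iff $(I-\mathcal P_\gamma^2)f_1$ equals an expression in $f_2$, and $f_3$ is determined by $f_1,f_2$. The Ionescu--Tulcea--Marinescu spectral decomposition (Theorem~A) gives $\mathcal P_\gamma=\langle\,\cdot\,,1\rangle\varrho_0+\mathcal Z_\gamma$ with $\mathcal Z_\gamma$ of spectral radius $<1$ on $\mathrm{BV}$; since the right-hand side has zero mean, one inverts $I-\mathcal Z_\gamma^2$ on $\mathrm{BV}([0,q])$ to solve for $f_1$ given any $f_2\in\mathrm{BV}([q,\gamma])$. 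The resulting map $\mathcal B_+:f_2\mapsto f_1+f_2+f_3$ is the identity on $[q,\gamma]$, hence injective, and $\mathrm{BV}([q,\gamma])$ is infinite-dimensional precisely because $\gamma>q$ makes this interval nontrivial. That is the genuine source of the infinite dimension, not a multiplicity of invariant measures.

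Finally, your rescaling $t=q\tau$ and the separate ``DFT constraint'' are unnecessary complications. The paper absorbs the perturbation $\theta=1/q$ directly into the base interval: one works on $[0,q]$ with $V_\gamma(x)=q\{\gamma/x\}_1$ and the $q$-periodization operator $O_q$, and the analysis then runs exactly parallel to MHR with no residual twist to verify.
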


\begin{remark}
\begin{enumerate}[(a)]
\item Theorem \ref{th11} asserts that $\Lambda :=\Lambda_\gamma^\theta$ is a \textit{Fourier 
nonuniqueness set} for the branch $\Gamma_+$ whenever $\gamma>q.$ The presence of $\theta$ 
showing up in the condition of $\gamma$ which is somewhat unexpected. The proof of Theorem 
\ref{th3} works along the same lines as in \cite{MHR} but with modifications at appropriate 
places. The question is still open when $\theta$ is irrational. 
\smallskip

\item Let $\Lambda_\gamma^\theta$ be the lattice-cross $\left((2\mathbb Z+\{2\theta\})\times\{0\}\right)\cup\left(\{0\}\times2\gamma\mathbb Z\right),$ where $\theta=1/q,~q\in\mathbb N,$ 
and $\gamma$ is a positive real. It seems likely, that $\mathcal{AC}\left(\Gamma_+,\Lambda_\gamma^\theta\right)=\{0\}$ if and only if $\gamma< q,$ and for the critical case $\gamma=q,$  
$\mathcal{AC}\left(\Gamma_+,\Lambda_\gamma^\theta\right)$ is one-dimensional in analogy with 
the results in \cite{HR2}. The question is still open.
\end{enumerate}
\end{remark}
By duality, Theorem \ref{th11} is equivalent to the following density result.

\begin{theorem}\label{th12}
Let $\mathcal K_\gamma$ be the subspace of all linear span of the functions 
$\{e^{q}_n(x):=e^{2\pi i(n+1/q)x};~n\in\mathbb Z\}\cup\{e^{\gamma}_n(x):
=e^{2\pi in\gamma/x};~n\in\mathbb Z\}$ in $L^\infty(\mathbb R_+),$ where $q\in\mathbb N$  
and $\gamma$ is a positive real. Then the pre-annihilator space $\mathcal K_\gamma^\perp$ 
is infinite-dimensional for $\gamma>q.$
\end{theorem}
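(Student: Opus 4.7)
The plan is to mimic the Perron-Frobenius transfer-operator approach of \cite{MHR}, modifying it to absorb both the translational shift $\theta = 1/q$ in the first factor of the lattice-cross and the threshold $\gamma > q$ in place of $\gamma > 1$. First, for $f \in \mathcal K_\gamma^\perp \subset L^1(\mathbb R_+)$, I would translate the two families of vanishing moments into periodization identities on $(0,1)$. Setting $g(x) = e^{2\pi i x/q} f(x)$ and using $\hat g(n) = 0$ for every $n \in \mathbb Z$ together with the classical periodization argument gives the shift-relation
\[ \sum_{k=0}^{\infty} e^{2\pi i k/q}\, f(x+k) = 0 \quad \text{for a.e.\ } x \in (0,1). \]
For the second family, the involution $y = \gamma/x$ followed by periodization of the transformed function $\tilde f(y) = \gamma f(\gamma/y)/y^2$ yields the transfer-relation
\[ \mathcal L_\gamma f(u) := \sum_{k=0}^{\infty} \frac{\gamma}{(u+k)^2}\, f\!\left(\frac{\gamma}{u+k}\right) = 0 \quad \text{for a.e.\ } u \in (0,1), \]
where $\mathcal L_\gamma$ is the Perron-Frobenius operator of the Gauss-type map $T_\gamma(x) = \gamma/x - \lfloor \gamma/x \rfloor$.

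Next, following \cite{MHR}, I would combine the two relations into a single fixed-point problem. Using the shift-relation to eliminate $f$ on a ``reservoir'' strip and substituting into the transfer-relation, the system can be recast as $\mathcal L_{\gamma,q} F = F$ for $F$ in a suitable $L^1$-space on a bounded interval, where $\mathcal L_{\gamma,q}$ is the operator $\mathcal L_\gamma$ twisted by the $q$-th roots of unity $e^{2\pi i k/q}$ coming from the shift-relation. To produce an infinite-dimensional fixed-point space when $\gamma > q$, I would decompose the problem according to characters of $\mathbb Z/q\mathbb Z$ and perform the explicit rescaling $x \mapsto x/q$, which turns the threshold $\gamma > q$ into $\gamma/q > 1$ and aligns the $q$-th root of unity twist with the dynamics of $T_\gamma$; the appropriate character-isotypic component of the fixed-point problem then reduces, up to the change of variables, to the untwisted fixed-point problem solved in \cite{MHR} for $\gamma > 1$. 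Inverting the chain of transformations sends linearly independent fixed points to linearly independent elements of $\mathcal K_\gamma^\perp$, establishing the theorem.

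The step I expect to be hardest is the verification that the twisted transfer operator is genuinely conjugate (through the rescaling and the character decomposition) to the untwisted operator of \cite{MHR} on the $\gamma/q$ scale, since the twist by $q$-th roots of unity destroys the positivity and Markov structure underlying the invariant-density construction of \cite{MHR}. I would address this by combining a support restriction dictated by the shift-relation (supporting $f$ on an interval aligned with the $q$-adic periodic structure of the twist) with the rescaling $x \mapsto x/q$, so that the phases fit naturally into the branch structure of $T_\gamma$; only after this reduction do the ergodic-theoretic arguments of \cite{MHR} go through with the effective parameter $\gamma/q > 1$.
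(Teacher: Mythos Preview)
Your periodization identities are correct, but the detour through a twisted transfer operator is unnecessary, and the untwisting step you flag as hardest is indeed a gap as written. The twist by $q$-th roots of unity appears only because you periodize with period $1$; since $e_n^q(x)=e^{2\pi i(n+1/q)x}$ is in fact $q$-periodic (as $(n+1/q)\cdot q\in\mathbb Z$), periodizing with period $q$ removes the phases entirely. This is exactly what the paper does: it works on $[0,q]$ from the outset, with the Gauss-type map $V_\gamma(x)=q\{\gamma/x\}_1$ on $[0,q)$ and its Perron-Frobenius operator $\mathcal P_\gamma$ on $L^1([0,q])$. The derivative satisfies $|V_\gamma'(x)|=q\gamma/x^2\ge\gamma/q>1$ on $(0,q)$, so $V_\gamma$ is uniformly expanding precisely when $\gamma>q$, and the spectral decomposition, the unique positive invariant density, and the construction of the extension operator $\mathcal B_+:\text{BV}([0,\gamma]\setminus[0,q])\to L^1(\mathbb R_+)$ all carry over verbatim from \cite{MHR} with effective parameter $\gamma_0=\gamma/q$. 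No character decomposition and no untwisting are needed.

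Your proposed reduction does not go through as sketched: the shift-relation singles out one fixed character of $\mathbb Z/q\mathbb Z$, so a character decomposition does not diagonalize anything useful, and the rescaling $x\mapsto x/q$ applied to the transfer operator of $T_\gamma$ on $(0,1)$ does not conjugate it to the transfer operator of $T_{\gamma/q}$. What \emph{does} work, and is essentially the paper's argument in compressed form, is to rescale \emph{before} periodizing: the substitution $f(x)=q^{-1}F(x/q)$ gives
\[
\int_0^\infty f(x)\,e_n^q(x)\,dx=\int_0^\infty F(y)\,e^{2\pi i(nq+1)y}\,dy,\qquad
\int_0^\infty f(x)\,e_n^\gamma(x)\,dx=\int_0^\infty F(y)\,e^{2\pi in(\gamma/q)/y}\,dy,
\]
so $F\in\mathcal N_{\gamma/q}^\perp$ forces $f\in\mathcal K_\gamma^\perp$, and the infinite-dimensionality follows at once from Theorem~\ref{th10} applied with parameter $\gamma/q>1$. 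Your rescaling intuition is correct; it should just be applied at the outset rather than after the twist has been introduced.
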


\subsection{The Perron-Frobenius operators}

In this section, we recall the definitions and notation related to the Perron-Frobenius operators 
associated with a $\mathcal{C}^2$-smooth piecewise monotonic transform from (\cite{MHR}, Section 3) 
as far as possible. The spectral property of the Perron-Frobenius operators has played a significant 
role in the HUP.

\subsubsection{} \textit{Perron-Frobenius operators on bounded intervals.}
\smallskip

\noindent Let $I\subset\mathbb R$ be a closed and bounded interval and $m$ be the Lebesgue measure 
defined on the $\sigma$-algebra of $I.$ Following (\cite{MHR}, Definition 3.2), a measurable map 
$\tau : I\rightarrow I$ is said to be a \textit{"partially filling $\mathcal{C}^2$-smooth piecewise 
monotonic transform"} if there exists a countable collection of pairwise disjoint open intervals 
say, $\{I_u\}_{u\in\mathcal U},$ where $\mathcal U$ is the index set, such that the following holds:  
\begin{enumerate}[(i)]
\item $m\left(I\setminus\bigcup\{I_u: u\in\mathcal U\}\right)=0,$

\item for any $u\in\mathcal U,$ the map $\tau_u:=\tau|_{I_u}$ is strictly monotone 
and can be extended to a $\mathcal{C}^2$-smooth function on $\bar{I_u}$ with $\tau_u'\neq0$ on $I_u^o,$

\item there exists a positive number say, $\delta$ such that $m\left(\tau(I_u)\right)\geq\delta$ 
for all $u\in\mathcal U.$ 
\end{enumerate}
\smallskip

Following (\cite{MHR}, Definition 3.1), $\tau$ is said to be a \textit{"filling $\mathcal{C}^2$-smooth 
piecewise monotonic transform"}, if the above conditions (i),(ii) holds for $\tau$ along with 
\smallskip

\noindent $(iii)'$ for every $u\in\mathcal U,$ the map $\tau_u : \bar{I_u}\rightarrow I$ is onto. 
\smallskip

Observe that, condition $(iii)'$ is much stronger than condition $(iii).$ In the above context, each 
$I_u$ is called a \textit{"fundamental interval"} and $\tau_u$ is the corresponding \textit{"branch"}.
\smallskip

The Koopman operator $\mathcal C_\tau : L^\infty(I)\rightarrow L^\infty(I)$ corresponding to a measurable 
map $\tau : I\rightarrow I$ is defined by letting \[\mathcal C_\tau[\varphi]=\varphi\circ\tau.\]
The Perron-Frobenius operator $\mathcal P_\tau : L^1(I)\rightarrow L^1(I)$ is the pre-dual adjoint 
(Banach space dual) of $\mathcal C_\tau$ is given by 
\begin{equation}\label{eq03}
\langle\mathcal P_\tau[\psi],\varphi\rangle_I=\langle\psi,\mathcal C_\tau[\varphi]\rangle_I,~~
\text{where}~\psi\in L^1(I)~\text{and}~\varphi\in L^\infty(I).
\end{equation}
The operator $\mathcal P_\tau$ is linear and a norm contraction on $L^1(I),$ therefore, its spectrum 
$\sigma(\mathcal P_\tau)$ is contained in the closed unit disk $\bar{\mathbb D}=\{\lambda\in\mathbb C:|\lambda|\leq1\}.$

\subsubsection{} \textit{The spectral decomposition of Perron-Frobenius operators.} 
\smallskip

\noindent\textit{Functions of bounded variation in one variable.} Let $I\subset\mathbb R$ be a closed 
bounded interval. For any function $h : I\rightarrow\mathbb C,$ the pointwise variation of 
$h$ in $I$ is defined by letting 
\[pV(h,I) :=\sup\limits_{\substack{t_1,\ldots,t_n\in I,\\t_1<\cdots<t_n;~n\geq2}}\left\{\sum\limits_{i=1}^{n-1}|h(t_{i+1})-h(t_{i})|\right\}.\] 
Then $h$ is said to be a function of bounded variation provided $pV(h,I)<\infty.$ Let 
$\text{BV}(I)$ denote the subspace of all functions in $L^1(I)$ such that the pointwise 
variation is finite. The essential variation $eV(h,I)$ of $h$ in which the pointwise 
variation is minimized in the equivalence class is defined by 
\[eV(h,I) :=\inf\left\{pV(\tilde h,I) : \tilde h=h~\text{except~ on ~a~ set~ of~Lebesgue~ 
measure~ zero}\right\}.\] 
For any $h\in\text{BV}(I),$ from (\cite{AFP}, Theorem 3.27) we get that the infimum in 
the expression of $eV(h,I)$ is achieved. Hence the space $\text{BV}(I)$ equipped with the norm 
\[\|h\|_\text{BV}=\|h\|_{L^1(I)}+eV(h,I),~~h\in\text{BV}(I),\] 
becomes a Banach space. 
\smallskip

Next, we state the spectral decomposition of Perron-Frobenius operators $\mathcal P_\tau$
associated to $\tau$ on $I.$ Let $\partial\bar{\mathbb D}$ denote the boundary of the closed unit disk 
$\bar{\mathbb D}$ and the point spectrum of $\mathcal P_\tau$ is denoted by $\sigma_{point}(\mathcal P_\tau).$ 
Then the following spectral decomposition for $\mathcal P_\tau$ is stated in (\cite{MHR}, Theorem C) which 
is a consequence of the Ionescu-Tulcea and Marinescu theorem (for details see \cite{BG,IG}). Recall the definition 
of $\mathcal U^m_\sharp;m\geq1$ from (\cite{MHR}, p. 39). In particular, for $m=1,$ we have $\mathcal U^m_\sharp
=\mathcal U.$ 
\smallskip

\noindent{\textbf{Theorem A.}} Let $\tau : I \rightarrow I$ is a \textit{partially filling 
$\mathcal C^2$-smooth piecewise monotonic transform} such that  
\begin{enumerate}[(i)]
\item \text{[uniform expansiveness]} there exists an integer $m\geq1$ and $\epsilon>0$ such 
that $|(\tau^m)'(x)|\geq 1+\epsilon$ for all $x\in\bigcup\{I_u : u\in\mathcal U^m_\sharp\},$
\smallskip

\item \text{[second derivative condition]} there exists $M>0$ such that 
$|\tau''(x)|\leq M |\tau'(x)|^2$ for all $x\in\bigcup\{I_u : u\in\mathcal U\},$
\end{enumerate}
then $\Lambda_\tau~:=\sigma_{point}(\mathcal P_\tau)\cap\partial\bar{\mathbb D}$ is a finite set namely,  
$\Lambda_\tau=\{\alpha_1,\ldots,\alpha_s\}$ and one of the eigenvalues is 1, say $\alpha_1=1.$  
Let $E_i$ denotes the eigenspace of $\mathcal P_\tau$ for the eigenvalue $\alpha_i,$ then $E_i$ 
is finite-dimensional and $E_i$ is contained in $\text{BV}(I).$ In addition, 
\[\mathcal P_\tau^n[h]=\sum\limits_{i=1}^s\alpha_i^n\mathcal P_{\tau,i}[h] + \mathcal Z_\tau^n[h],
~~h\in L^1(I),~~n=1,2,\ldots,\] 
where the operators $\mathcal P_{\tau,i}$ are projections onto $E_i,$ 
and the operator $\mathcal Z_\tau$ acts boundedly on $L^1(I)$ as well as on $\text{BV}(I).$ 
Moreover, $\mathcal Z_\tau$ acting on $\text{BV}(I)$ has spectral radius $<1.$

\section{Proof of Theorem \ref{th8}}

\subsection{Dynamics of a Gauss-type map}

\subsubsection{} \textit{A Gauss-type map.} For $t\in\mathbb R,$ the expression $\{t\}_2$ 
represent the unique number in $(-1,1]$ such that $t-\{t\}_2\in2\mathbb Z.$ We consider a 
Gauss-type map $U$ on the interval $(-p,p];~~p\in\mathbb N$ which is defined 
by letting 
\[U(x):=\begin{cases} 
      p\left\{-\frac{p}{x}\right\}_2,~x\neq0, \\
      \quad 0 \qquad\text{for}~ x=0. 
   \end{cases}
\] 
For $u\in\mathbb Z^\ast=\mathbb Z\setminus\{0\},$ the map $U$ can be explicitly written as  
$U(x)=p\left(2u-p/x\right)$ whenever $\frac{{p}}{2u+1}<x\leq\frac{p}{2u-1},$ 
and hence $U: \left(\frac{{p}}{2u+1},\frac{{p}}{2u-1}\right]\rightarrow(-p,p]$ 
is one-to-one and for $x\in(-p,p]\setminus\frac{p}{2\mathbb Z+1},$ the derivative of $U$ 
is $U'(x)=\frac{p^2}{x^2}.$ For a continuous $2p$-periodic function $\varphi$ on $\mathbb R$ 
and a finite complex-valued Borel measure $\nu$  on $(-p,p],$ the integral 
$\int_{(-p,p]}\varphi(x)d\nu(x)$ is well-defined. The above integral makes sense for all 
\textit{pseudo-continuous} functions on $(-p,p].$ 
 
\smallskip

Note that for a \textit{pseudo-continuous} 
function $\varphi$ on $(-p,p],$ $\varphi\circ U$ is \textit{pseudo-continuous}. Given $\lambda\in\mathbb C,$ 
a finite complex Borel measure $\nu$ on $(-p,p]$ is $(U,\lambda)$-invariant provided that
\[\int_{(-p,p]}\varphi\left(U(x)\right)d\nu(x)=\lambda\int_{(-p,p]}\varphi(x)d\nu(x)\]
holds for all \textit{pseudo-continuous} functions $\varphi,$ that is, 
$\lambda\nu=\nu(\{0\})\delta_0+\sum\limits_{u\in\mathbb Z^\ast}\nu_u,$ where $\delta_t$ denote  
the point mass at $t,$ and $d\nu_u(x)=d\nu\left(\frac{p^2}{2pu-x}\right).$ It is easy to see that, 
for $|\lambda|>1,$ there are no $(U,\lambda)$-invariant measures except zero measure.
\smallskip

In this work, we mainly study the properties the following map which is associated to the parameter $\beta.$  
For $0<\beta<\infty,$ the Gauss-type map $U_\beta : (-p,p]\rightarrow(-p,p]$ is defined by letting 
\begin{equation}
U_\beta(x):=\begin{cases} 
      p\left\{-\frac{\beta}{x}\right\}_2,~x\neq0, \\
      \quad 0 \qquad\text{for}~ x=0. 
   \end{cases}
\end{equation}
For $u\in\mathbb Z^\ast=\mathbb Z\setminus\{0\},$ on the interval $\left(\frac{\beta}{2u+1},\frac{\beta}{2u-1}\right],$ the map $U_\beta$ can be expressed as $U_\beta(x)=p\left(2u-\beta/x\right).$ In particular, $U_p=U.$ 
For $\lambda\in\partial\bar{\mathbb D},$   
a finite complex Borel measure $\nu$ on $(-p,p]$ is $(U_\beta,\lambda)$-invariant provided 
that $\lambda\nu=\nu(\{0\})\delta_0+\sum\limits_{u\in\mathbb Z^\ast}\nu_u,$ where 
$d\nu_u(x)=d\nu\left(\frac{p\beta}{2pu-x}\right).$

\subsubsection{} \textit{Dynamical properties of $U_\beta$ for $\beta>p.$}
\smallskip 

Denote $\beta_0:=\beta/p>1.$ In this section, we observe that $U_\beta$ is a \textit{partially 
filling $\mathcal C^2$-smooth piecewise monotonic transform} for $\beta_0>1.$ We need to find a 
unique absolutely continuous invariant probability measure for $U_\beta$ having positive density.
\smallskip
 
Let $\mathcal U:=\mathcal U_{\beta_0}$ denote the index set which contain the points $u\in\mathbb Z^*$ 
so that the associate fundamental interval is nonempty, that is, 
\[I_u :=\left(\frac{\beta}{2u+1},~\frac{\beta}{2u-1}\right)\cap(-p,~p)\neq \emptyset.\]   
If $\beta_0$ is an odd integer, then $\{\beta_0\}_2=1.$ In this case, it is easy to see that 
for all $u\in\mathcal U,$ where $\mathcal U$ be the set of all nonzero integers with 
$|u|\geq\frac{1}{2}(\beta_0+1),$ the fundamental intervals are given by 
\begin{equation}\label{eq198}
I_u :=\left(\frac{\beta}{2u+1},~\frac{\beta}{2u-1}\right)
\end{equation} 
so that $U_\beta(I_u)=(-p,~p)$ for all $u\in\mathcal U.$ Thus in this particular case,  
$U_\beta$ fulfill the \text{"filing"} condition for all $u\in\mathcal U.$ 
\smallskip
  
If $\beta_0$ is not an odd integer, then $-1<\{\beta_0\}_2<1.$ Denote $u_0 :=\frac{1}{2}(\beta_0-\{\beta_0\}_2),$ then $u_0\in\mathbb Z$ with $u_0\geq1.$ A simple calculation shows that for all 
$u\in\mathcal U\setminus\{\pm u_0\},$ where $\mathcal U$ be the set of all nonzero integers 
with $|u|\geq u_0,$ the fundamental intervals are given by 
\begin{equation}\label{eq199}
I_u :=\left(\frac{\beta}{2u+1},~\frac{\beta}{2u-1}\right)
\end{equation} 
so that $U_\beta(I_u)=(-p,~p)$ for all $u\in\mathcal U\setminus\{\pm u_0\}.$ Thus  
in this case, $U_\beta$ fulfill the \text{"filing"} condition for all $u\in\mathcal U$ 
except two branches corresponding to $\{\pm u_0\}.$ The edge fundamental intervals corresponding 
to $\pm u_0$ are explicitly given by 
\begin{equation}\label{eq200}
I_{u_0} :=\left(\frac{\beta}{2{u_0}+1},~p\right),~~I_{-u_0} :=\left(-p,~-\frac{\beta}{2{u_0}+1}\right).
\end{equation} 
In view of the above facts, we conclude that $U_\beta$ is a \textit{partially filling $\mathcal C^2$-smooth piecewise monotonic transform} for $\beta_0>1.$
\smallskip

The map $U_\beta$ satisfy the \textit{"uniform expansiveness"} condition with $m=1,$ because  
of the derivative $|U_\beta'(x)|=\frac{p\beta}{x^2}\geq \beta_0>1$ for all $x\in\{I_u : u\in\mathcal U\}.$   
Also, we have \textit{"second derivative condition"} due to $|U_\beta''(x)|\leq\frac{2}{p}|U_\beta'(x)|^2$ 
for all $x\in\{I_u : u\in\mathcal U\}.$ Our aim is to find a unique absolutely continuous invariant 
measure for $U_\beta$ which has a positive density. Since $U_\beta$ is a \textit{partially 
filling $\mathcal C^2$-smooth piecewise monotonic transform}, (see \cite{MHR}, p. 45, Remark 5.1(b)), 
there exists a $U_\beta$-invariant absolutely continuous probability measure, but it may not be unique. 
Although, form (\cite{MHR}, p. 45, Remark 5.2) we can get the uniqueness for \textit{filling $\mathcal C^2$
-smooth piecewise monotonic transform}. Also, $U_\beta$ may not always be a Markov map, therefore, 
to get the uniqueness of the absolutely continuous $U_\beta$-invariant measure, we have to prove 
the condition $(iii)$ of Adler's theorem which is stated in (\cite{MHR}, p. 46, Theorem D), for the details 
see \cite{BG, LY}.

\subsubsection{} \textit{The iterates of an interval.} 
\smallskip

The following result will help to get the unique ergodic $U_\beta$-invariant absolutely continuous 
probability measure with positive density.   

\begin{lemma}\label{lemma10}
$(p<\beta<\infty)$ Let $J_0\subset[-p,p]$ be any nonempty open interval, then for sufficiently large 
positive integers, namely that $n\geq n_0,$ we have ${\mathscr C_p}:=(-p,p)\subset U_\beta^n(J_0).$
\end{lemma}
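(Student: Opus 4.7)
The plan is to show that some iterate $U_\beta^{n-1}(J_0)$ contains a non-edge fundamental interval $I_u$ in full; since $U_\beta(I_u) = (-p,p) = {\mathscr C_p}$ for such $u$ by (\ref{eq198})--(\ref{eq199}), one more iterate then yields ${\mathscr C_p} \subseteq U_\beta^n(J_0)$, and the lemma holds with $n_0 = n$.

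First I would shrink $J_0$ to a non-empty open sub-interval lying in a single fundamental interval $I_{u_*}$, which is possible because $\{I_u\}_{u \in \mathcal U}$ tiles $(-p,p)$ up to a countable set of endpoints. I would then build inductively a sequence of non-empty open intervals $\tilde J_n \subseteq U_\beta^n(J_0)$, each contained in a single fundamental interval, by letting $\tilde J_{n+1}$ be a longest component of $U_\beta(\tilde J_n) \cap I_u$ over $u \in \mathcal U$. The uniform expansion $|U_\beta'(x)| = p\beta/x^2 \geq \beta_0 := \beta/p > 1$ and the mean value theorem give $|U_\beta(\tilde J_n)| \geq \beta_0 |\tilde J_n|$. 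The key geometric input is: because $U_\beta(\tilde J_n)$ is a connected interval and the $\{I_u\}$ tile $(-p,p)$, either $U_\beta(\tilde J_n)$ fully contains some $I_u$, or it lies in the union of at most two adjacent fundamental intervals; in the latter case $|\tilde J_{n+1}| \geq (\beta_0/2)|\tilde J_n|$. Once $|\tilde J_n|$ exceeds $2\max_u |I_u|$ with $\tilde J_n$ in the central region, connectivity forces $\tilde J_n$ to contain some non-edge $I_u$ in full, and a single further iterate of $U_\beta$ gives ${\mathscr C_p}$.

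The main obstacle is the range $1 < \beta_0 \leq 2$, where the step-wise factor $\beta_0/2 \leq 1$ does not by itself force geometric growth of $|\tilde J_n|$. To handle this I would exploit the non-uniform expansion of $U_\beta$: since $|U_\beta'(x)| = p\beta/x^2 \to \infty$ as $x \to 0$, any iterate $\tilde J_n$ falling near the singularity is expanded by a factor much larger than $\beta_0$, and after a bounded number of iterations must leave a fixed neighborhood of $0$. Averaging the expansion over suitably many consecutive steps then yields $|\tilde J_n| \to \infty$. The detailed case analysis follows that of \cite{MHR} for the corresponding Gauss-type map, which, via the rescaling $\phi(x) = x/p$, is conjugate to ours with $p = 1$; the modifications needed are cosmetic, accounting for the edge fundamental intervals $I_{\pm u_0}$ that appear in our setting.
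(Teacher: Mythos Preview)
Your reduction via the conjugacy $\phi(x)=x/p$ to the case $p=1$ treated in \cite{MHR} is valid, and the paper's proof is indeed nothing more than a $p$-rescaled reproduction of \cite{MHR}'s Lemma~7.1; so at the level of ``defer to \cite{MHR}'' the proposal is correct and the route is the same.

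However, your own description of the mechanism for $1<\beta_0\le 2$ is not what either proof does, and as written it is not a proof. There is no ``averaging the expansion over suitably many consecutive steps''; the problem is not intervals near the singularity $0$ (where expansion is enormous) but intervals straddling a fundamental boundary near the \emph{edges} $\pm p$, where the halved factor $\beta_0/2\le 1$ gives no growth. The paper resolves this by a geometric case split, not an ergodic average. One fixes the point $x_0\in I_{u_0}$ determined by $U_\beta\bigl((\beta/(2u_0+1),\,x_0)\bigr)=I_{-u_0}$; a direct computation gives $p\beta/x_0^2>2$ for every $\beta_0>1$, so on $[-x_0,x_0]$ the halved expansion factor $\beta_0'':=\min\{\beta_0,\,p\beta/(2x_0^2)\}$ exceeds $1$ and your growth argument runs there unchanged. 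An interval that straddles a fundamental boundary but is \emph{not} contained in $[-x_0,x_0]$ must contain $(\beta/(2u_0+1),\,x_0)$ or its reflection, hence maps under $U_\beta$ onto an edge interval $I_{\mp u_0}$. The edge intervals are then handled separately: the paper shows, via a second auxiliary constant $\beta_0'>1$ depending only on $\beta_0$, that finitely many iterates of $I_{\pm u_0}$ already cover $(-p,p)$. This two-pronged split (central region with uniform growth factor $\beta_0''>1$; escape to edge intervals, which are treated directly) is the missing ingredient in your sketch.

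A minor point: your intervals $\tilde J_n$ are by construction contained in a single $I_u$, so the stopping criterion ``once $|\tilde J_n|$ exceeds $2\max_u|I_u|$'' cannot be met as stated; what you mean is that $U_\beta(\tilde J_n)$, not $\tilde J_n$, eventually contains a full fundamental interval.
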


\begin{proof}
We want to show that $U_\beta^n(J_0)$ will cover the open interval ${\mathscr C_p}$ for sufficiently 
large positive integers namely, $n\geq n_0.$ The proof will be carried out in the following cases. 
\smallskip

\noindent{\bf\textit{1. The case $\beta_0$ is an odd integer.}}
\smallskip

\noindent In this case $\beta_0\geq3$ and hence $\beta_0/2>1.$ The fundamental intervals $I_u$ are 
given by (\ref{eq198}) for all $u\in\mathcal U,$ where $\mathcal U$ be the set of all nonzero integers 
with $|u|\geq\frac{1}{2}(\beta_0+1).$ Here, we want to show that ${\mathscr C_p}\subset U_\beta^n(J_0)$ for 
sufficiently large $n.$ There are two possibilities: 
\smallskip

\noindent {\bf{(i).}} Suppose $J_0$ contains one of the fundamental intervals say, $I_u$
for some $u\in\mathcal U,$ then it follows that ${\mathscr C_p}=U_\beta(I_u)\subset U_\beta(J_0).$ 
\smallskip

\noindent {\bf{(ii).}} If none of the fundamental intervals are contained in $J_0,$ then we have 
the following possibilities:  
\smallskip

\noindent {\bf{(a).}} Suppose $J_0$ is contained in one of the fundamental intervals namely, 
$I_u,~u\in\mathcal U,$ then by uniform expansiveness condition of $U_\beta,$ we have 
$m(J_1)\geq\beta_0m(J_0)\geq\frac{\beta_0}{2}m(J_0),$ where $J_1:=U_\beta(J_0).$
\smallskip

\noindent {\bf{(b).}} Suppose $J_0$ has nonempty intersection with two neighbouring fundamental intervals 
say $I_u,I_{u'}$ and $J_0$ is contained in the closure of ${I_u\cup I_{u'}}.$ In this case, one of the sets 
$J_0\cap I_u,~J_0\cap I_{u'}$ say, $J_0\cap I_u$ has length at least $\frac{1}{2}m(J_0).$ It follows that for  $J_1:=U_\beta(J_0\cap I_u)$ with $J_1\subset U_\beta(J_0),$ we have 
\[m(J_1)=m\Big(U_\beta(J_0\cap I_u)\Big)\geq\beta_0m(J_0\cap I_u)\geq\frac{\beta_0}{2}m(J_0).\]
For both the cases {\bf{(a)}} and {\bf{(b)}}, there exist an interval $J_1$ which is contained in 
$U_\beta(J_0)$ and $m(J_1)\geq\frac{\beta_0}{2}m(J_0).$ Next, we consider $J_1$ in place of $J_0,$ and 
therefore we get an interval namely, $J_2$ such that $m(J_2)\geq\frac{\beta_0}{2}m(J_1)\geq\big(\frac{\beta_0}{2}\big)^2m(J_0).$ We repeat this process, and hence we get an increasing sequence of intervals namely, $J_0,J_1,J_2,\ldots$ such that $m(J_l)\geq\big(\frac{\beta_0}{2}\big)^lm(J_0)$ with $J_l\subseteq U_\beta^l(J_0).$ Since the length of $J_l$ depends on $l,$ after finitely many steps say, $l=l_0$ we must stop this process, 
because $J_{l_0}$ will contain one of the fundamental intervals, and hence ${\mathscr C_p}\subset U_\beta(J_{l_0})\subseteq U_\beta^{l_0+1}(J_0).$

\smallskip

\noindent{\bf\textit{2. The case $\beta_0$ is not an odd integer.}} 
\smallskip

\noindent Since $\{\beta_0\}_2\in(-1,1),$ it follows that $2u_0-1<\beta_0<2u_0+1.$ The fundamental 
intervals $I_u$ are given by (\ref{eq199}) for all $u\in\mathcal U\setminus\{\pm u_0\},$ where 
$\mathcal U$ be the set of all nonzero integers with $|u|\geq u_0.$ The edge fundamental intervals 
$I_{u_0},~I_{-u_0}$ are given by (\ref{eq200}). For every $u\in\mathcal U,$ the map $U_\beta$ is given 
by $U_\beta(x)=p(2u-\beta/x)$ for all $x\in I_u.$
\smallskip

\noindent{\bf\textit{2(A). The case $J_0$ is an edge fundamental interval.}}  
Assume that $J_0=I_{-u_0},$ and the case $J_0=I_{u_0}$ is similar. In this case, we want to show that 
${\mathscr C_p}\subset U_\beta^n(J_0)$ for sufficiently large $n.$ We first observe that 
\begin{equation}\label{eq2020}
U_\beta(J_0)=U_\beta(I_{-u_0})=\Big(p(\beta_0-2u_0),~p\Big)\supset I_{u_0}':=\left(p(\beta_0-2u_0),~\frac{\beta}{2u_0+1}\right).
\end{equation}
Then there are two possibilities: 
\smallskip

\noindent {\bf{(i).}} If $(\beta_0-2u_0)\leq\beta_0/(2u_0+3),$ then we have $I_{u_0+1}\subset I_{u_0}'.$ 
Therefore, ${\mathscr C_p}=U_\beta(I_{u_0+1})\subset U_\beta(I_{u_0}')\subset U_\beta^2(I_{-u_0})=U_\beta^2(J_0).$
\smallskip

\noindent {\bf{(ii).}} If $(\beta_0-2u_0)>\beta_0/(2u_0+3),$ then $I_{u_0}'\subset I_{u_0+1}$ and the point 
$p(\beta_0-2u_0)\in I_{u_0+1}.$ Next, we claim that for $y\in I_{u_0}'':=\left(\frac{\beta}{2u_0+3},~p(\beta_0-2u_0)\right]\subset I_{u_0+1},$ there exists a positive number $\beta_0'$ with $\beta_0\geq\beta_0'>1,$ 
which depends only on $\beta_0$ and closest to the point $1$ such that \[m\Big(U_\beta(I_y)\Big)\geq\beta_0'm\Big((y,~p)\Big),\] where $I_y:=\left(y,~\frac{\beta}{2u_0+1}\right).$ Since $U_\beta(I_y)=\Big(p(2u_0+2-\beta/y),~1\Big),$ it is enough to show that for $y\in I_{u_0}'',$ 
\begin{equation}\label{eq203}
\beta_0'y+p\beta/y\geq p(2u_0+1)+p\beta_0'.
\end{equation}
By the change of variables $y:=py',$ it is equivalent to show that for $y'\in I_{u_0}''':=\left(\frac{\beta_0}{2u_0+3},~(\beta_0-2u_0)\right],$ we have 
\begin{equation}\label{eq204}
\beta_0' y'+\beta_0/{y'}\geq 2u_0+1+\beta_0'.
\end{equation}
Now, (\ref{eq204}) is same as (\cite{MHR}, p. 54, Equation 7.4). Thus we conclude that $\beta_0'$ 
exists and sufficiently close to 1 so that the minimum exists in (\ref{eq203}) at $y_0:=p(\beta_0-2u_0).$ 
Note that $\beta_0>2$ because $y_0>\beta_0/(2u_0+3).$ In particular, we have 
$m\Big(\left(U_\beta(y_0),~p\right)\Big)\geq\beta_0'm\Big((y_0,~p)\Big).$ 
If we consider $J_1:=U_\beta(J_0)=(y_0,~p)$ and $J_2:=U_\beta(I_{u_0}')=\Big(U_\beta(y_0),~p\Big),$ 
then $J_2\subset U_\beta^2(J_0)=U_\beta(J_1)$ with $m\left(J_2\right)\geq\beta_0'm\left(J_1\right).$ If 
$U_\beta(y_0)\leq\beta/(2u_0+3),$ then $I_{u_0+1}\subset J_2,$ and hence we are done, because 
${\mathscr C_p}=U_\beta(I_{u_0+1})\subset U_\beta(J_2)=U_\beta^2(I_{u_0}')\subset U_\beta^3(I_{-u_0})=U_\beta^3(J_0).$ If $U_\beta(y_0)>\beta/(2u_0+3),$ then repeat the same argument to get a bigger interval say, $J_3$ 
with right end point $1$ so that $I_{u_0+1}$ is contained in $J_3.$ This completes proof of the case 
$J_0=I_{-u_0}.$
\smallskip
 
\noindent{\bf\textit{2(B). The case $J_0\subset[-p,p]$ is an arbitrary nonempty open interval.}} 
Recall that $2u_0-1<\beta_0<2u_0+1$ and the edge fundamental intervals are given by (\ref{eq200}). 
Let the point $x_0\in I_{u_0}$ is given by 
\begin{equation}\label{eq201}
x_0:=\frac{(2{u_0}+1)\beta}{2u_0(2{u_0}+1)+\beta_0}.
\end{equation}
A simple calculation gives $p\beta/x_0^2>2.$ Since on the fundamental interval $I_u$ the Gauss-type map is 
given by $U_\beta(x)=2u-\beta/x,$ the point $x_0\in I_{u_0}$ has the property that 
\begin{equation}\label{eq202}
U_\beta\left(I_{x_0}\right)=I_{-u_0}~\text{and}~U_\beta\left(I_{-x_0}\right)=I_{u_0}, 
\end{equation}
where  
\[I_{x_0}:=\left(\frac{\beta}{2{u_0}+1},~x_0\right)~\text{and}~I_{-x_0}:=\left(-x_0,~-\frac{\beta}{2{u_0}+1}\right).\]
Moreover, for $x\in[-x_0,x_0]\cap\bigcup\{I_u:~u\in\mathcal U\},$ we have $U_\beta'(x)\geq p\beta/x_0^2>2.$ 
Therefore, if we write $\beta_0'':=\text{min}\Big\{\beta_0,~\frac{p\beta}{2x_0^2}\Big\},$ then $\beta_0''>1.$  
In this case, we show that ${\mathscr C_p}\subset U_\beta^n(J_0)$ for sufficiently large $n.$ 
There are two possibilities: 
\smallskip

\noindent {\bf{(i).}} Suppose $J_0$ contains one of the fundamental intervals say, $I_u$ for some 
$u\in\mathcal U\setminus\{\pm u_0\},$ then it directly follows that ${\mathscr C_p}=U_\beta(I_u)\subset U_\beta(J_0).$ If $J_0$ contains the edge fundamental intervals, then also we are done by the case 
{\bf{2(A)}} above.
\smallskip

\noindent {\bf{(ii).}} If none of the fundamental intervals are contained in $J_0,$ then we have 
the following possibilities:  
\smallskip

\noindent {\bf{(a).}} Suppose $J_0\subset I_u$ for some $u\in\mathcal U,$ then by uniform expansiveness condition 
of $U_\beta,$ we get that $m(J_1)\geq\beta_0m(J_0)\geq\beta_0''m(J_0),$ where $J_1:=U_\beta(J_0).$
\smallskip

\noindent {\bf{(b).}} Suppose $J_0$ has nonempty intersection with two neighbouring fundamental intervals 
say $I_u,I_{u'}$ and $J_0$ is contained in the closure of ${I_u\cup I_{u'}}.$ There are two possibilities: 
\smallskip

\noindent {\bf{(b1).}} Assume that $J_0\subset[-x_0,x_0].$ In this case, one of the sets $J_0\cap I_u,~J_0\cap I_{u'}$ say, $J_0\cap I_u$ has length at least $\frac{1}{2}m(J_0).$ It follows that for $J_1:=U_\beta(J_0\cap I_u)$ 
with $J_1\subset U_\beta(J_0),$ 
\[m(J_1)=m\Big(U_\beta(J_0\cap I_u)\Big)\geq\frac{p\beta}{x_0^2}m(J_0\cap I_u)\geq\frac{p\beta}{2x_0^2}m(J_0)
\geq\beta_0''m(J_0).\]
For both the cases {\bf{(a)}} and {\bf{(b1)}}, there exist an interval $J_1$ which is contained in 
$U_\beta(J_0)$ with $m(J_1)\geq\beta_0''m(J_0).$ Next, we consider $J_1$ in place of $J_0,$ and therefore we 
get a bigger interval namely, $J_2$ such that $m(J_2)\geq\beta_0''m(J_1)\geq{\beta_0''}^2m(J_0).$ We 
repeat this process and hence we get an increasing sequence of intervals namely, $J_0,J_1,J_2,\ldots$ 
such that $m(J_l)\geq{\beta_0''}^lm(J_0)$ with $J_l\subseteq U_\beta^l(J_0).$ Since the length of $J_l$ 
depends on $l,$ after finitely many steps say, $l=l_0$ we must stop this process, because $J_{l_0}$ will contain one 
of the fundamental intervals, and hence ${\mathscr C_p}\subset U_\beta(J_{l_0})\subseteq U_\beta^{l_0+1}(J_0).$
\smallskip

\noindent {\bf{(b2).}} It only remains the case when $J_0$ is not contained in $[-x_0,x_0].$ Then 
$\bar I_{x_0}\subset J_0\cap I_{u_0}$ or $\bar I_{-x_0}\subset J_0\cap I_{-u_0},$ and hence we are done, 
because in view of (\ref{eq202}) we have one of the following: 
\begin{enumerate}[(\text{b2}1).]
\item ${\mathscr C_p}=U_\beta(I_{-u_0})=U_\beta^2(I_{x_0})\subset U_\beta^2(J_0\cap I_{u_0})\subset U_\beta^2(J_0),$
\smallskip

\item ${\mathscr C_p}=U_\beta(I_{u_0})=U_\beta^2(I_{-x_0})\subset U_\beta^2(J_0\cap I_{-u_0})\subset U_\beta^2(J_0).$
\end{enumerate}
This completes the proof of Lemma \ref{lemma10}.
\end{proof}

\subsection{Characterization of the pre-annihilator space $\mathcal F_\beta^\perp$}

\subsubsection{} \textit{Periodic and inverted periodic functions.}
\smallskip

Let $L^\infty_{p}(\mathbb R)$ denote the space of all functions $f\in L^\infty(\mathbb R)$ 
such that the map $x\longmapsto e^{-\pi ix/p}f(x)$ is 2-periodic. Then the weak-star closure 
in $L^\infty(\mathbb R)$ of the linear span of the functions $\{e^{p}_n(x):=e^{\pi i(n+1/p)x};~n\in\mathbb Z\}$ equals to $L^\infty_{{p}}(\mathbb R).$
\smallskip

Let $L^\infty_{\beta}(\mathbb R)$ denote the space of all functions $f\in L^\infty(\mathbb R)$ 
such that the map $x\longmapsto f(\beta/x)$ is 2-periodic. Then the weak-star closure in 
$L^\infty(\mathbb R)$ of the linear span of the functions $\{e^{\beta}_n(x):=e^{\pi in\beta/x};~n\in\mathbb Z\}$ equals to $L^\infty_{\beta}(\mathbb R).$ 
\smallskip

Observe that the functions in $L^\infty_{{p}}(\mathbb R)$ are defined freely on $[-p,p],$ 
and due to periodicity they are uniquely determined on $\mathbb R\setminus[-p,p].$ 
Similarly, the functions in $L^\infty_{\beta}(\mathbb R)$ are defined freely on 
$\mathbb R\setminus[-\beta,\beta],$ and due to periodicity they are uniquely determined 
on $[-\beta,\beta].$ For $E\subseteq\mathbb R,$ the function $\chi_E$ denote the 
characteristic function of $E$ on $\mathbb R.$ This observations motivate to define 
the operators $\text{S}_p,\text{T}_{\beta}$ as follows.
\smallskip

The operator $\text{S}_p : L^\infty([-p,p])\rightarrow L^\infty(\mathbb R\setminus[-p,p])$ 
is defined by   
\[\text{S}_p[\varphi](x)=\varphi\left(p\left\{x/p\right\}_2\right)\chi_{\mathbb R\setminus[-p,p]}(x),\]
where $\varphi\in L^\infty([-p,p]).$ The operator 
$\text{T}_{\beta} : L^\infty(\mathbb R\setminus[-\beta,\beta])\rightarrow L^\infty([-\beta,\beta])$ 
is defined by  
\[\text{T}_{\beta}[\psi](x)=\psi\left(\frac{\beta}{\{\beta/x\}_2}\right)\chi_{[-\beta,\beta]\setminus\{0\}}(x),\] 
where $\psi\in L^\infty(\mathbb R\setminus[-\beta,\beta]).$ Now, in terms of the operators 
$\text{S}_p,\text{T}_{\beta}$ the functions space $L^\infty_{p}(\mathbb R)$ and $L^\infty_{\beta}(\mathbb R)$ 
are given by 
\[\begin{cases} 
 L^\infty_{p}(\mathbb R)=\Big\{\varphi+\text{S}_p[\varphi]:\varphi\in L^\infty([-p,p])\Big\},\\
 L^\infty_{\beta}(\mathbb R)=\Big\{\psi+\text{T}_{\beta}[\psi]:\psi\in L^\infty(\mathbb R\setminus[-\beta,\beta])\Big\}.
\end{cases}
\]

\subsubsection{} \textit{The Perron-Frobenius operator.}
For $p<\beta<\infty,$ the \textit{Koopman operator} $\mathcal C_\beta : L^\infty([-p,p])\rightarrow L^\infty([-p,p])$ associated to $U_\beta$ be the map \[\mathcal C_\beta[\varphi](x)=\varphi\circ U_\beta(x), 
~~x\in[-p,p].\] The predual adjoint of $\mathcal C_\beta$ is the \textit{Perron-Frobenius}  
operator $\mathcal P_\beta : L^1([-p,p])\rightarrow L^1([-p,p])$ associated to $U_\beta$ is given by 
\[\mathcal P_\beta [h](x)=\sum\limits_{u\in\mathbb Z^\ast}\frac{p\beta}{(2pu-x)^2}h\left(\frac{p\beta}{2pu-x}\right),~~x\in[-p,p].\]
The operator $\mathcal P_\beta$ is linear and a norm contraction on $L^1([-p,p]).$ Thus the point spectrum $\sigma_{point}(\mathcal P_\beta)$ of $\mathcal P_\beta$ is contained in the closed unit disk $\bar{\mathbb D}.$ Here, we use the notation $\mathcal C_\beta$ in place of 
$\mathcal C_{U_\beta}$ and $\mathcal P_\beta$ for $\mathcal P_{U_\beta}.$
\smallskip

Next, we build up a connection between the operators $\text{S}_p,\text{T}_{\beta}$ and $\mathcal C_\beta,$  
so that we can study the pre-annihilator space $\mathcal F_\beta^\perp$ via the properties of 
the Perron-Frobenius operators for $p<\beta<\infty.$ To do this, we need to define some restriction 
operators. For a measurable set $E\subseteq\mathbb R$ with $m(E)>0,$ we denote $L^s(E)$ the closed 
subspace of $L^s(\mathbb R)$ by extending the functions vanish on $\mathbb R\setminus E,$ where 
$s=1,\infty.$ For $p<\beta<\infty,$ consider the following restriction operators: 
\[\begin{cases} 
\text{R}_1~:~L^\infty(\mathbb R\setminus[-p,p])\rightarrow 
L^\infty(\mathbb R\setminus[-\beta,\beta]), \\
\text{R}_2~:~L^\infty([-\beta,\beta])\rightarrow 
L^\infty([-p,p]),\\
\text{R}_3~:~L^\infty([-\beta,\beta])\rightarrow 
L^\infty([-\beta,\beta]\setminus[-p,p]),\\
\text{R}_4~:~L^\infty(\mathbb R\setminus[-p,p])\rightarrow 
L^\infty([-\beta,\beta]\setminus[-p,p]). 
   \end{cases}
\] 
Then the pre-dual adjoints (Banach space dual) are the maps $\text{R}_1^*,\text{R}_2^*,\text{R}_3^*$ 
and $\text{R}_4^*$ defined on the corresponding $L^1$-spaces. A simple calculation shows that $\mathcal C_\beta^2=\text{R}_2\text{T}_{\beta}\text{R}_1\text{S}_p$ whenever $\beta>p,$ and hence $\mathcal P_\beta^2=\text{S}_p^*\text{R}_1^*\text{T}_{\beta}^*\text{R}_2^*.$ 

\begin{proposition}\label{prop30}
For $p<\beta<\infty,$ suppose $f\in L^1(\mathbb R)$ such that $f=f_1+f_2+f_3,$ where 
$f_1\in L^1([-p,p]),$ $f_2\in L^1([-\beta,\beta]\setminus[-p,p]),$ and 
$f_3\in L^1(\mathbb R\setminus[-\beta,\beta]).$ Then $f\in\mathcal F_\beta^\perp$ 
if and only if $(i)~~(\text{I}-\mathcal P_\beta^2)f_1=\text{S}_p^*(-\text{R}_4^*+\text{R}_1^*\text{T}_{\beta}^*\text{R}_3^*)f_2,$ where $\text{I}$ is the identity operator on $L^1([-p,p]),$ and $(ii)~~f_3=-\text{T}_{\beta}^*\text{R}_2^*f_1-\text{T}_{\beta}^*\text{R}_3^*f_2.$ 
\end{proposition}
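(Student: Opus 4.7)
The plan is to unfold the two defining conditions of $\mathcal F_\beta^\perp$ by testing $f$ against the explicit parametrizations of $L^\infty_p(\mathbb R)$ and $L^\infty_\beta(\mathbb R)$ recorded just above, and then to translate the resulting integral identities into operator equations via the pre-dual adjoints of the restriction maps $\text{R}_1,\text{R}_2,\text{R}_3,\text{R}_4.$

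First, I would observe that since the linear span of $\{e^p_n\}_{n\in\mathbb Z}$ is weak-star dense in $L^\infty_p(\mathbb R)$ and the span of $\{e^\beta_n\}_{n\in\mathbb Z}$ is weak-star dense in $L^\infty_\beta(\mathbb R),$ the condition $f\in\mathcal F_\beta^\perp$ is equivalent to
\[
\int_{\mathbb R}f\bigl(\varphi+\text{S}_p[\varphi]\bigr)dx=0,\qquad\int_{\mathbb R}f\bigl(\psi+\text{T}_\beta[\psi]\bigr)dx=0
\]
holding for every $\varphi\in L^\infty([-p,p])$ and every $\psi\in L^\infty(\mathbb R\setminus[-\beta,\beta])$ respectively.

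Next, I would exploit the decomposition $f=f_1+f_2+f_3$ to unpack each pairing piece by piece. In the first pairing, $\varphi$ only meets $f_1$ while $\text{S}_p[\varphi]$ is supported in $\mathbb R\setminus[-p,p]$ and therefore only meets $f_2$ and $f_3;$ splitting that support through the restrictions $\text{R}_4$ (onto $[-\beta,\beta]\setminus[-p,p]$) and $\text{R}_1$ (onto $\mathbb R\setminus[-\beta,\beta]$) and moving to pre-dual adjoints, the arbitrariness of $\varphi$ forces
\[
f_1+\text{S}_p^*\text{R}_4^*f_2+\text{S}_p^*\text{R}_1^*f_3=0.
\]
The analogous bookkeeping for the second pairing, in which $\psi$ only meets $f_3$ and $\text{T}_\beta[\psi]$ is split via $\text{R}_2$ and $\text{R}_3,$ yields exactly condition (ii):
\[
f_3+\text{T}_\beta^*\text{R}_2^*f_1+\text{T}_\beta^*\text{R}_3^*f_2=0.
\]

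Finally, substituting condition (ii) into the preceding displayed identity eliminates $f_3;$ after rearrangement, the coefficient of $f_1$ becomes $\text{I}-\text{S}_p^*\text{R}_1^*\text{T}_\beta^*\text{R}_2^*,$ which by the identity $\mathcal P_\beta^2=\text{S}_p^*\text{R}_1^*\text{T}_\beta^*\text{R}_2^*$ recorded above is precisely $\text{I}-\mathcal P_\beta^2.$ This produces condition (i). The converse implication is a direct substitution of (i) and (ii) back into the two integral identities. The main difficulty is not analytic depth but careful bookkeeping of supports and adjoints: the Perron--Frobenius operator $\mathcal P_\beta^2$ only becomes visible once $f_3$ has been eliminated via (ii), and one has to keep close track of which of the four restriction maps acts on which slice of $f.$
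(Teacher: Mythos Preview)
Your proposal is correct and follows essentially the same approach as the paper, which simply refers to \cite{MHR}, Proposition 6.1: you test $f$ against the parametrizations $\varphi+\text{S}_p[\varphi]$ and $\psi+\text{T}_\beta[\psi]$, split according to supports via $\text{R}_1,\ldots,\text{R}_4$, pass to pre-dual adjoints, and then eliminate $f_3$ using (ii) together with the identity $\mathcal P_\beta^2=\text{S}_p^*\text{R}_1^*\text{T}_\beta^*\text{R}_2^*$ to obtain (i). This is exactly the bookkeeping argument the paper has in mind.
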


\begin{proof}
The proof of Proposition \ref{prop30} works along the same lines as in the proof of 
(\cite{MHR}, p. 52, Proposition 6.1), hence omitted.
\end{proof}

\begin{remark}[$0<\beta\leq p$]
The composition operator $\text{T}_{\beta}\text{S}_p : L^\infty([-p,p])\rightarrow 
L^\infty([-p,p])$ is given by    
\[\text{T}_{\beta}\text{S}_p[\varphi](x)=\varphi\left({p}\left\{\frac{\beta_0}{\{\beta/x\}_2}\right\}_2\right)\chi_{E_{\beta}}(x),~~\varphi\in L^\infty([-p,p]),\] where $\beta_0=\beta/{p}$ and 
$E_{\beta}=\left\{x\in(-\beta,\beta]\setminus\{0\}: \frac{\beta_0}{\{\beta/x\}_2}\in\mathbb R\setminus(-1,1]\right\},$
and the weighted Koopman operator $\mathcal C_\beta : L^\infty([-p,p])\rightarrow L^\infty([-p,p])$ associated to $U_\beta$ be the map $\mathcal C_\beta[\varphi](x)=\varphi\circ U_\beta(x)\chi_{[-\beta,\beta]}(x),$  
where $x\in\mathbb R.$ The predual adjoint of $\mathcal C_\beta$ is 
the Perron-Frobenius operator $\mathcal P_\beta : L^1([-p,p])\rightarrow L^1([-p,p])$ is given by 
$\mathcal P_\beta [h](x)=\sum\limits_{u\in\mathbb Z^\ast}\frac{p\beta}{(2pu-x)^2}h\left(\frac{p\beta}{2pu-x}\right).$  As the operator $\mathcal P_\beta$ is linear and a norm contraction, the point 
spectrum $\sigma_{point}(\mathcal P_\beta)$ of $\mathcal P_\beta$ is contained in $\bar{\mathbb D}.$  
Observe that $\text{T}_{\beta}\text{S}_p=\mathcal C_\beta^2.$ In \cite{GRA}, it has shown that 
$\lambda\in\partial\bar{\mathbb D}$ is not an eigenvalue of $\mathcal P_\beta$ whenever $0<\beta\leq p,$  
which in turn implies that $\mathcal{AC}\left(\Gamma,\Lambda_\beta^\theta\right)=\{0\}$ for $0<\beta_0\leq 1.$
\end{remark}

\subsection{Exterior spectrum of $\mathcal P_\beta,~\beta>p$ and the proof of Theorem \ref{th8}.}

Next, we study the exterior spectrum of the Perron-Frobenius operator $\mathcal P_\beta$ for $p<\beta<\infty.$ 
We know from Theorem A that $1$ is an eigenvalue of $\mathcal P_\beta$ and the associated eigenfunction is 
in $\text{BV}([-p,p]).$ The proof of Theorem \ref{th13} gives us that one of the eigenfunctions for the 
eigenvalue $1$ must be positive, and it can be normalized by a suitable constant so that we get the positive 
density of an ergodic $U_\beta$-invariant absolutely continuous probability measure.

\begin{theorem}\label{th13}
Let $p<\beta<\infty,$ then $\alpha_1=1$ is a simple eigenvalue of $\mathcal P_\beta,$ and is 
the only eigenvalue of $\mathcal P_\beta$ contained in $\partial\bar{\mathbb D}.$ Moreover, the 
eigenfunctions for $\alpha_1=1$ are nonzero scalar multiple of $\varrho_0,$ 
where $\varrho_0dm$ is the unique ergodic $U_\beta$-invariant absolutely continuous 
probability measure with $\varrho_0>0$ almost everywhere. 
\end{theorem}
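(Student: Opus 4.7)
The plan is to combine Theorem A with the covering property established in Lemma \ref{lemma10} to pin down the peripheral spectrum of $\mathcal{P}_\beta$. Since we have already verified that $U_\beta$ is a partially filling $\mathcal{C}^2$-smooth piecewise monotonic transform satisfying uniform expansiveness (with $m = 1$) and the second derivative condition, Theorem A applies directly: $\Lambda_\beta := \sigma_{point}(\mathcal{P}_\beta) \cap \partial\bar{\mathbb{D}}$ is a finite set containing $\alpha_1 = 1$, and every peripheral eigenspace is a finite-dimensional subspace of $\text{BV}([-p,p])$.

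To produce a nonnegative fixed point of $\mathcal{P}_\beta$, I would take any real eigenfunction $\varrho \in \text{BV}([-p,p])$ at eigenvalue $1$ and decompose $\varrho = \varrho^+ - \varrho^-$. The positivity of $\mathcal{P}_\beta$ together with preservation of the $L^1$-norm on nonnegative functions forces $\mathcal{P}_\beta \varrho^\pm = \varrho^\pm$ separately, yielding $\varrho_0 \geq 0$ with $\int \varrho_0 \, dm = 1$ after normalization. Strict positivity of $\varrho_0$, uniqueness of the invariant density, and ergodicity of $\mu := \varrho_0 \, dm$ are then all consequences of Adler's theorem (\cite{MHR}, Theorem D), whose remaining hypothesis (iii)---that every nonempty open subinterval $J_0 \subset [-p,p]$ has some iterate $U_\beta^n(J_0)$ covering $(-p,p)$---is exactly the content of Lemma \ref{lemma10}.

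Simplicity of the eigenvalue $1$ is then a standard consequence of ergodicity: if $\varrho_0'$ were a second linearly independent fixed point in $\text{BV}([-p,p])$, one could take real/imaginary parts and form $\varrho_0' - c\varrho_0$ (for a suitable real constant $c$) to obtain a nonzero fixed point with nontrivial positive and negative parts, each generating a $U_\beta$-invariant absolutely continuous probability measure with proper support in $[-p,p]$, contradicting ergodicity of $\mu$.

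For the peripheral uniqueness, let $g \in \text{BV}([-p,p])$ satisfy $\mathcal{P}_\beta g = \lambda g$ with $|\lambda| = 1$. The pointwise triangle inequality $|\mathcal{P}_\beta g| \leq \mathcal{P}_\beta |g|$ combined with $L^1$-contractivity of $\mathcal{P}_\beta$ gives $\mathcal{P}_\beta |g| = |g|$, so by simplicity $|g| = c\varrho_0$ for some $c > 0$. Writing $g = c\varrho_0 \phi$ with $|\phi| = 1$ a.e.\ on $\{\varrho_0 > 0\}$, the equality case of the triangle inequality (applicable because the weights in the Perron-Frobenius sum $\mathcal{P}_\beta(\varrho_0 \phi)(x)/\varrho_0(x)$ sum to $1$, thanks to $\mathcal{P}_\beta \varrho_0 = \varrho_0$) forces $\phi \circ U_\beta = \bar\lambda \, \phi$ a.e. Since $|\phi| \equiv 1$ and $U_\beta$ is ergodic, $\phi$ must be a.e.\ constant, which in turn forces $\lambda = 1$. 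The main obstacle throughout is the verification of Adler's covering condition via Lemma \ref{lemma10}: because $U_\beta$ is only \emph{partially} filling, the Lasota-Yorke existence theorem alone does not yield uniqueness of the invariant density, and the explicit iterate-covering statement is what supplies the missing ingredient.
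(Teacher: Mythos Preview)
Your overall strategy matches the paper's: the paper simply says that Lemma~\ref{lemma10} is used to obtain $\varrho_0>0$ a.e.\ and that the proof otherwise follows \cite[Theorem~7.2]{MHR}. Your reconstruction via Theorem~A and Adler's theorem (Theorem~D in \cite{MHR}), with Lemma~\ref{lemma10} supplying the covering hypothesis~(iii), is exactly that argument.

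There is, however, a genuine gap in your final step. From $\mathcal{P}_\beta g=\lambda g$ with $|\lambda|=1$ you correctly reach $|g|=c\varrho_0$ and, via equality in the triangle inequality, the Koopman eigenequation $\phi\circ U_\beta=\bar\lambda\,\phi$ with $|\phi|\equiv1$ $\mu$-a.e. But the inference ``$U_\beta$ ergodic $\Rightarrow$ $\phi$ is a.e.\ constant'' is false in general: ergodicity only says that \emph{invariant} functions ($\lambda=1$) are constant, not that there are no nontrivial $L^\infty$-eigenfunctions at other unimodular $\lambda$ (an irrational rotation is ergodic yet has a full circle of Koopman eigenvalues). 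What is actually needed is that $(U_\beta,\mu)$ is \emph{exact} (equivalently, that $\mathcal{P}_\beta^n h\to\langle h,1\rangle\varrho_0$ in $L^1$ for every $h\in L^1$), which does force the peripheral point spectrum of $\mathcal{P}_\beta$ to reduce to $\{1\}$.

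The fix is already implicit in what you cite: Adler's theorem (Theorem~D in \cite{MHR}, cf.\ \cite{BG,LY}) under the covering condition of Lemma~\ref{lemma10} yields not merely ergodicity but exactness of the unique absolutely continuous invariant measure. Once you invoke exactness (or, equivalently, weak mixing) rather than ergodicity at this point, your argument goes through and coincides with the route taken in \cite{MHR}.
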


\begin{proof} 
Here, Lemma \ref{lemma10} will help to show that $\varrho_0>0$ almost everywhere. The proof of 
Theorem \ref{th13} works along the same lines as in the proof of (\cite{MHR}, p. 55, Theorem 7.2), 
hence omitted.
\end{proof}
 
For $p<\beta<\infty,$ the Gauss-type map $\tilde U_\beta : [-\beta,\beta]\rightarrow[-\beta,\beta]$ is 
defined by letting 
\[\tilde U_\beta(x):=\begin{cases} 
      p\left\{-\frac{\beta}{x}\right\}_2,~x\neq0, \\
      \quad 0 \qquad\text{for}~ x=0. 
   \end{cases}\]
The \textit{Koopman operator} $\tilde{\mathcal C}_\beta : L^\infty([-\beta,\beta])\rightarrow L^\infty([-\beta,\beta])$ associated to $\tilde U_\beta$ be the map $\tilde{\mathcal C}_\beta[\varphi](x)=\varphi\circ \tilde U_\beta(x),~~x\in[-\beta,\beta].$ Then the pre-dual adjoint of $\tilde{\mathcal C}_\beta$ is the \textit{Perron-Frobenius} operator $\tilde{\mathcal P}_\beta : L^1([-\beta,\beta])\rightarrow L^1([-\beta,\beta])$ associated to $\tilde U_\beta.$ A simple calculation shows that $\tilde{\mathcal C}_\beta^2=\text{T}_{\beta}\text{R}_1\text{S}_p\text{R}_2$ whenever $\beta>p,$ and hence $\tilde{\mathcal P}_\beta^2=\text{R}_2^*\text{S}_p^*\text{R}_1^*\text{T}_{\beta}^*.$ It is easy to see that $\tilde U_\beta$ is a \textit{partially filling $\mathcal C^2$-smooth piecewise monotonic transform} for $\beta>p.$ 
Also, $\tilde U_\beta$ satisfy the \textit{uniform expansiveness} condition with $m=2,$ and \textit{second derivative condition}. Therefore, it follows from Theorem A that $\tilde{\mathcal P}_\beta$ maps $\text{BV}([-\beta,\beta])$ into $\text{BV}([-\beta,\beta]).$ Now, following the proof of (\cite{MHR}, Lemma 8.1), for any $f\in\text{BV}([-\beta,\beta]\setminus[-p,p]),$ we have 
\begin{equation}\label{eq210}
-\text{S}_p^*\text{R}_4^*f+\text{S}_p^*\text{R}_1^*\text{T}_{\beta}^*\text{R}_3^*f\in\text{BV}([-p,p]).
\end{equation}
To complete the proof of Theorem \ref{th8}, we prove the following result. Although the proof 
of Theorem \ref{th14} follows the same lines as in (\cite{MHR}, Theorem 8.2), we write it here 
for the sake of completeness.

\begin{theorem}\label{th14}
For $p<\beta<\infty,$ there exits a bounded linear operator $\mathcal B~:~\text{BV}([-\beta,\beta]\setminus[-p,p])\rightarrow L^1(\mathbb R)$ such that the range of $\mathcal B$ is infinite-dimensional, and 
contained in $\mathcal F_\beta^\perp.$ Moreover, the range of $\mathcal B$ is contained in the 
weighted $L^2$-space $L^2(\mathbb R,\omega),$ where $\omega(x)=1+x^2.$
\end{theorem}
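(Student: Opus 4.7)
The plan is to invert the characterization in Proposition \ref{prop30}. Given $f_2 \in \text{BV}([-\beta,\beta]\setminus[-p,p])$, I first form
\[
g := \text{S}_p^*\bigl(-\text{R}_4^* + \text{R}_1^* \text{T}_\beta^* \text{R}_3^*\bigr) f_2,
\]
which by (\ref{eq210}) lies in $\text{BV}([-p,p])$. If I can then solve $(\text{I}-\mathcal P_\beta^2) f_1 = g$ for some $f_1 \in L^1([-p,p])$ and set $f_3 := -\text{T}_\beta^* \text{R}_2^* f_1 - \text{T}_\beta^* \text{R}_3^* f_2$, Proposition \ref{prop30} guarantees that $\mathcal B f_2 := f_1 + f_2 + f_3 \in \mathcal F_\beta^\perp$.

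The main input is the spectral inversion. By Theorem \ref{th13} together with Theorem A applied to $\mathcal P_\beta^2$, the only eigenvalue of $\mathcal P_\beta^2$ on $\partial\bar{\mathbb D}$ is $1$, which is simple with eigenfunction $\varrho_0$, and $\mathcal P_\beta^{2n} = \mathcal P_{\beta,1} + \mathcal Z_\beta^{2n}$ on $\text{BV}([-p,p])$, where $\mathcal P_{\beta,1}[h] = \varrho_0 \int_{-p}^p h\, dm$ is the one-dimensional spectral projection and $\mathcal Z_\beta$ has spectral radius $<1$ on $\text{BV}([-p,p])$. Consequently $(\text{I}-\mathcal P_\beta^2)$ is invertible on the closed hyperplane $\{h \in \text{BV}([-p,p]) : \int_{-p}^p h\, dm = 0\}$, with bounded inverse given by the norm-convergent Neumann series $\sum_{n \geq 0} \mathcal Z_\beta^{2n}$. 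I therefore restrict to the closed, codimension-one subspace
\[
V := \Bigl\{ f_2 \in \text{BV}([-\beta,\beta]\setminus[-p,p]) : \int_{-p}^p g\, dm = 0 \Bigr\},
\]
define $\mathcal B$ there by the preceding recipe, and extend by zero on any one-dimensional algebraic complement. Since the middle slice of $\mathcal B f_2$ is $f_2$ itself, $\mathcal B|_V$ is injective, so the range of $\mathcal B$ is infinite-dimensional.

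For the weighted $L^2$-inclusion, the pieces $f_1$ and $f_2$ live in $\text{BV} \subset L^\infty$ on bounded intervals, where $\omega$ is bounded, so they lie trivially in $L^2(\mathbb R, \omega)$. Only the tail $f_3$ on $\mathbb R \setminus [-\beta,\beta]$ requires a decay estimate. A branch-by-branch change of variables $x = \beta y/(\beta + 2uy)$ yields the explicit pre-dual formula
\[
\text{T}_\beta^*[\phi](y) = \sum_{u \in \mathbb Z^*} \frac{\beta^2}{(\beta + 2uy)^2}\, \phi\!\left(\frac{\beta y}{\beta + 2uy}\right),\qquad |y| > \beta.
\]
For $\phi \in L^\infty([-\beta,\beta])$ (which covers $\phi = \text{R}_2^* f_1$ and $\phi = \text{R}_3^* f_2$ since BV-functions are bounded) and $|y| \geq 2\beta$, the elementary bound $|\beta + 2uy| \geq |u||y|$ gives $|\text{T}_\beta^*[\phi](y)| \leq C \|\phi\|_\infty / y^2$. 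Hence $(1+y^2)|f_3(y)|^2 = O(y^{-2})$ at infinity, so $f_3 \in L^2(\mathbb R \setminus [-\beta,\beta], \omega)$.

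The principal obstacle is controlling the construction at the level of $\text{BV}([-p,p])$ rather than only $L^1$: specifically, (\ref{eq210}) is needed to place $g$ in $\text{BV}([-p,p])$, and Theorem A is needed to guarantee that $\mathcal Z_\beta$ has spectral radius strictly less than $1$ as an operator on $\text{BV}([-p,p])$, which in turn ensures convergence of the Neumann series in the $\text{BV}$-norm. Once this bookkeeping is in place, boundedness of $\mathcal B$ is immediate from the boundedness of each constituent operator, and the tail estimate for $\text{T}_\beta^*$ completes the weighted $L^2$-claim.
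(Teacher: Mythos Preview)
Your approach is essentially the paper's: form $g$ via (\ref{eq210}), invert $\text{I}-\mathcal P_\beta^2$ on the mean-zero hyperplane in $\text{BV}([-p,p])$ using the Neumann series in $\mathcal Z_\beta^2$, then define $f_3$ by condition (ii) of Proposition~\ref{prop30}. The one thing you miss is that the restriction to $V$ is vacuous: a direct duality computation (pair $g$ against $1$, using $\text{S}_p[1]=1$, $\text{T}_\beta[1]=1$, and that the restriction operators preserve constants) gives $\langle g,1\rangle_{[-p,p]}=-\langle f_2,1\rangle+\langle f_2,1\rangle=0$ for \emph{every} $f_2\in\text{BV}([-\beta,\beta]\setminus[-p,p])$. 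The paper exploits this to define $\mathcal B$ on the whole space as a genuine extension operator ($\mathcal Bf_2|_{[-\beta,\beta]\setminus[-p,p]}=f_2$), which immediately gives injectivity and hence the infinite-dimensional range without any quotient or complement gymnastics. Your explicit tail estimate for $\text{T}_\beta^*$ is correct and is what underlies the cited Proposition~8.3 of \cite{MHR}; you should also note (as you implicitly need) that the same branch sum is uniformly bounded for $\beta<|y|<2\beta$, so $f_3\in L^\infty_{\text{loc}}$ there as well.
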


\begin{proof}
We actually prove more precise statement, namely that, there exits a bounded linear operator 
$\mathcal B~:~\text{BV}([-\beta,\beta]\setminus[-p,p])\rightarrow L^1(\mathbb R)$ such that   
$\mathcal Bf(x)=f(x)$ a.e. $x\in [-\beta,\beta]\setminus[-{p},{p}],$ and for all 
$f\in\text{BV}([-\beta,\beta]\setminus[-p,p]).$ Moreover, $\mathcal B$ has infinite-dimensional 
range which is contained in $\mathcal F_\beta^\perp.$  
\smallskip

In view of Theorem A and Theorem \ref{th13}, we have the following spectral decomposition for the 
Perron-Frobenius operator $\mathcal P_\beta$ associated to $U_\beta :$ 
\begin{equation}\label{eq20}
\mathcal P_\beta^n[h]=\{\langle h,\phi_0\rangle_{[-p,p]}\}\varrho_0+\mathcal Z_\beta^n[h],~n=1,2,\ldots,
\end{equation} 
where $h\in L^1([-p,p]),$ and $\phi_0\in L^\infty([-p,p])$ such that $\langle \varrho_0,\phi_0\rangle_{[-p,p]}=1.$  
Also, $\varrho_0$ is the positive density of the ergodic $U_\beta$-invariant absolutely continuous 
probability measure on $[-p,p],$ and we have $\varrho_0\in\text{BV}([-p,p]),$ in addition, we can 
normalized by a suitable constant so that $\langle\varrho_0,1\rangle_{[-p,p]}=1.$ Moreover, $\mathcal Z_\beta$ 
acts on $\text{BV}([-p,p])$ and its spectral radius smaller than $1.$ In particular, we have 
$\mathcal Z_\beta[\varrho_0]=0,$ because $\varrho_0$ is invariant under $\mathcal P_\beta.$ 
Observe that, $\mathcal Z_\beta^n[h]\rightarrow0$ exponentially as $n\rightarrow\infty.$
\smallskip
 
Next, we claim that $\phi_0$ is the constant function $1$ almost everywhere on $[-p,p].$ To see 
this, let $h\in\text{BV}([-p,p]),$ then by (\ref{eq20}) we infer that  
\begin{eqnarray}\label{eq21}
\langle h,1\rangle_{[-p,p]} &=& \langle h,\mathcal C_\beta^n[1]\rangle_{[-p,p]}
=\langle \mathcal P_\beta^n[h],1\rangle_{[-p,p]}\\
&=& \langle h,\phi_0\rangle_{[-p,p]}\langle\varrho_0,1\rangle_{[-p,p]}+\langle\mathcal Z_\beta^n[h],1\rangle_{[-p,p]}\nonumber\\
&=& \langle h,\phi_0\rangle_{[-p,p]}+\langle\mathcal Z_\beta^n[h],1\rangle_{[-p,p]}\rightarrow
\langle h,\phi_0\rangle_{[-p,p]},~~\text{as}~n\rightarrow\infty.\nonumber
\end{eqnarray}
It is well known that $\text{BV}([-p,p])$ is dense in $L^1([-p,p]).$ Thus from (\ref{eq21}) get the claim. 
Further, as soon as $\phi_0=1,$ (\ref{eq20}) can be rewrite as 
\begin{equation}\label{eq22}
\mathcal P_\beta^n[h]=\{\langle h,1\rangle_{[-p,p]}\}\varrho_0+\mathcal Z_\beta^n[h],~n=1,2,\ldots, 
\end{equation}
where $h\in L^1([-p,p]),$ and from (\ref{eq21}) we get that 
\begin{equation}\label{eq23}
\langle\mathcal Z_\beta^n[h],1\rangle_{[-p,p]}=0;~~h\in L^1([-p,p]),~n=1,2,\ldots. 
\end{equation}
\smallskip

Now, we are in a position to construct an extension operator $\mathcal B$ from $\text{BV}([-\beta,\beta]\setminus[-p,p])$ onto $L^1(\mathbb R).$ To do so, pick an arbitrary $f_2\in\text{BV}([-\beta,\beta]\setminus[-p,p]),$ then from (\ref{eq210}), we know that $-\text{S}_p^*\text{R}_4^*f_2+\text{S}_p^*\text{R}_1^*\text{T}_{\beta}^*\text{R}_3^*f_2\in\text{BV}([-p,p]).$ Since $\mathcal Z_\beta$ acts on $\text{BV}([-p,p])$ has spectral radius 
smaller than $1,$ $I-\mathcal Z_\beta^2$ is invertible and hence, we define the operator $\mathcal B_1$ by letting  
\begin{equation}\label{eq24}
\mathcal B_1f_2 :=(I-\mathcal Z_\beta^2)^{-1}(-\text{S}_p^*\text{R}_4^*+\text{S}_p^*\text{R}_1^*\text{T}_{\beta}^*\text{R}_3^*)f_2\in\text{BV}([-p,p]).
\end{equation}
A simple calculation gives $\langle{-\text{S}_p^*\text{R}_4^*f_2+\text{S}_p^*\text{R}_1^*\text{T}_{\beta}^*\text{R}_3^*f_2},1\rangle_{[-p,p]}=0.$ If we denote $f_1:=\mathcal B_1f_2,$ then by (\ref{eq23}) and (\ref{eq24}), 
\begin{equation}\label{eq25}
\langle f_1,1\rangle_{[-p,p]}=\left\langle {\left(I-\mathcal Z_\beta^2\right)}[f_1],1\right\rangle_{[-p,p]}=0.
\end{equation}
Next, we define the operator $\mathcal B_3$ on $\text{BV}([-\beta,\beta]\setminus[-p,p])$ by letting 
\begin{equation}\label{eq26}
\mathcal B_3f_2 :=-\text{T}_{\beta}^*\text{R}_2^*f_1-\text{T}_{\beta}^*\text{R}_3^*f_2\in L^1(\mathbb R\setminus[-\beta,\beta]).
\end{equation}
We write $f_3:=\mathcal B_3f_2.$ Now, we define the operator $\mathcal B:\text{BV}([-\beta,\beta]\setminus[-p,p])\rightarrow L^1(\mathbb R)$ by letting \[\mathcal Bf_2 :=f_1+f_2+f_3\in L^1(\mathbb R),\]
with the understanding that each $f_k;~k=1,2,3$ can be extended to $\mathbb R$ by considering zero outside of their domain of definition. Then the bounded and linear operator $\mathcal B$ is clearly an extension operator, in the sense that for all  $f_2\in \text{BV}([-\beta,\beta]\setminus[-p,p]),$ 
\[\mathcal Bf_2(x)=f_2(x);~~\text{a.e.}~x\in [-\beta,\beta]\setminus[-p,p].\] 
Observe that the range of $\mathcal B$ is infinite-dimensional.  
Next, we claim that the range of $\mathcal B$ is contained in $\mathcal F_\beta^\perp.$ Actually, we have to 
verify the conditions $(i)$ and $(ii)$ of the Proposition \ref{prop30} for the functions $f_k;~k=1,2,3.$ 
In view of (\ref{eq25}), from (\ref{eq22}) we get that $\mathcal P_\beta^n[f_1]=\mathcal Z_\beta^n[f_1];~n=1,2,\ldots.$ Thus, from (\ref{eq24}) and (\ref{eq26}) we have the conditions $(i)$ and $(ii)$ of the Proposition \ref{prop30}.   
\smallskip

It follows from the proof of (Proposition 8.3, \cite{MHR}) that the range of $\mathcal B$ is contained in the weighted $L^2$-space $L^2(\mathbb R,\omega),$ where the weight $\omega(x)=1+x^2.$ This completes the proof of Theorem \ref{th14}.
\end{proof}

\section{Proof of Theorem \ref{th12}}

\subsection{Dynamics of a Gauss-type map}

\subsubsection{} \textit{A Gauss-type map.} For $t\in\mathbb R,$ the expression $\{t\}_1$ is the 
unique number in $[0,1)$ such that $t-\{t\}_1\in\mathbb Z.$ For $0<\gamma<\infty,$ consider the  
Gauss-type map $V_\gamma$ on the interval $[0,q).$ The map $V_\gamma : [0,q)\rightarrow[0,q)$ 
is defined by letting 

\[V_\gamma(x)=\begin{cases} 
      q\left\{\frac{\gamma}{x}\right\}_1,~x\neq0 \\
      \quad 0 \qquad x=0. 
   \end{cases}
\] 

Note that, for $v\in\mathbb N,$ the map $V_\gamma$ can be expressed as $V_\gamma(x)=q\left(\frac{\gamma}{x}-v\right)$ whenever $\frac{\gamma}{v+1}<x\leq\frac{\gamma}{v},$ and hence $V_\gamma : \left(\frac{\gamma}{v+1},\frac{\gamma}
{v}\right]\rightarrow[0,q)$ is one-to-one.

\subsubsection{} \textit{Dynamical properties of Gauss-type map for $\gamma>q.$}
\smallskip 

Denote $\gamma_0:=\gamma/q$ which is $>1.$ In this section, we observe that $V_\gamma$ is a \textit{partially 
filling $\mathcal C^2$-smooth piecewise monotonic transform} for $\gamma_0>1.$ We need to find a 
unique absolutely continuous invariant probability measure for $V_\gamma$ having positive density.
\smallskip

Let $\mathcal V:=\mathcal V_\gamma$ denote the index set which contain the points $v\in\mathbb N$ 
so that the associate fundamental interval is nonempty, that is, 
\[J_v :=\left(\frac{\gamma}{v+1},\frac{\gamma}{v}\right)\cap(0,q)\neq \emptyset.\]   
If $\gamma_0$ is an integer, then it is easy to see that 
for all $v\in\mathcal V,$ where $\mathcal V$ be the set of all nonzero positive integers with 
$v\geq\gamma_0,$ the fundamental intervals are given by  
$J_v :=\left(\frac{\gamma}{v+1},\frac{\gamma}{v}\right),~~v\in\mathcal V$  
so that $V_\gamma(J_v)=(0,q)$ for all $v\in\mathcal V.$ Thus in this particular case,  
$V_\gamma$ fulfill the \text{"filing"} condition for all $v\in\mathcal V.$  
If $\gamma_0$ is not an integer, write $v_0 :=\gamma_0-\{\gamma_0\}_1\geq 1,$ then a 
simple calculation shows that 
$J_v :=\left(\frac{\gamma}{v+1},\frac{\gamma}{v}\right),~~v\in\mathcal V\setminus\{v_0\},$ 
where $\mathcal V$ be the set of all positive integers with $v\geq v_0.$ 
Observe that $V_\gamma(J_v)=(0,q)$ for all $v\in\mathcal V\setminus\{v_0\},$ that is, 
in this case, $V_\gamma$ fulfill the \text{"filing"} condition for all $v\in\mathcal V$ 
except one branch corresponding to $\{v_0\}.$ In view of the above facts, we conclude that 
$V_\gamma$ is a \textit{partially filling $\mathcal C^2$-smooth piecewise monotonic transform} 
for $\gamma_0>1.$
\smallskip

The map $V_\gamma$ satisfy the \textit{"uniform expansiveness"} condition with $m=1,$ because  
of the derivative $|V_\gamma'(x)|=\frac{q\gamma}{x^2}\geq \gamma_0>1$ for all $x\in\{J_v : v\in\mathcal V\}.$   
Also, we have \textit{"second derivative condition"} due to $|V_\gamma''(x)|\leq\frac{2}{q}|V_\gamma'(x)|^2$ 
for all $x\in\{J_v : v\in\mathcal V\}.$ We aim to find a unique absolutely continuous invariant 
probability measure corresponding to $V_\gamma$ which has a positive density. Since $V_\gamma$ is a \textit{partially filling $\mathcal C^2$-smooth piecewise monotonic transform},  
there exists a $V_\gamma$-invariant absolutely continuous probability measure, but it may not be unique. 
Although, form (\cite{MHR}, p. 45, Remark 5.2) we can get the uniqueness for \textit{filling $\mathcal C^2$
-smooth piecewise monotonic transform}. Also, $V_\gamma$ may not always be a Markov map, therefore, 
to get the uniqueness of the absolutely continuous $V_\gamma$-invariant measure, we have to prove 
the condition $(iii)$ of Adler's theorem which is stated in (\cite{MHR}, p. 46, Theorem D) for the details 
see \cite{BG, LY}.

\subsection{Characterization of the pre-annihilator space $\mathcal K_\gamma^\perp$}

\subsubsection{} \textit{Periodic and inverted periodic functions.}
\smallskip 

Let $L^\infty_{q}(\mathbb R_+)$ denote the space of all functions $f\in L^\infty(\mathbb R_+)$ such that the map $x\longmapsto e^{-2\pi ix/q}f(x)$ is 1-periodic. Then the weak-star closure in $L^\infty(\mathbb R_+)$ of the linear span of the functions $\{e^{q}_n(x):=e^{2\pi i(n+1/q)x};~n\in\mathbb Z\}$ equals to $L^\infty_{q}(\mathbb R_+).$  
Let $L^\infty_{\gamma}(\mathbb R_+)$ denote the space of all functions $f\in L^\infty(\mathbb R_+)$ such that the map $x\longmapsto f(\gamma/x)$ is 1-periodic. Then the weak-star closure in $L^\infty(\mathbb R_+)$ of the linear span of the functions $\{e^{\gamma}_n(x):=e^{2\pi in\gamma/x};~n\in\mathbb Z\}$ equals to $L^\infty_{\gamma}(\mathbb R_+).$ 
Observe that the functions in $L^\infty_{q}(\mathbb R_+)$ are defined freely on $[0,q],$ and 
because of periodicity they are uniquely determined on $\mathbb R_+\setminus[0,q].$ Similarly, 
the functions in $L^\infty_{\gamma}(\mathbb R_+)$ are defined freely on $\mathbb R_+\setminus[0,\gamma],$ 
and due to periodicity they are uniquely determined on $[0,\gamma].$ 
\smallskip

The operator $\text{O}_q : L^\infty([0,q])\rightarrow L^\infty(\mathbb R_+\setminus[0,q])$ is defined by 
\begin{equation}
\text{O}_q[\varphi](x)=\varphi\left(q\left\{x/q\right\}_1\right)\chi_{\mathbb R_+\setminus[0,q]}(x),~~
\text{where}~\varphi\in L^\infty([0,q]).
\end{equation}  
The operator $\text{T}_{\gamma} : L^\infty(\mathbb R_+\setminus[0,\gamma])\rightarrow L^\infty([0,\gamma])$ is defined by 
\begin{equation}
\text{T}_{\gamma}[\psi](x)=\psi\left(\frac{\gamma}{\{\gamma/x\}_1}\right)\chi_{[0,\gamma]\setminus\{0\}}(x),~~ 
\text{where}~\psi\in L^\infty(\mathbb R_+\setminus[0,\gamma]).
\end{equation} 
In view of the above facts, we get that 
$L^\infty_q(\mathbb R_+)=\left\{\varphi+\text{O}_q[\varphi]:\varphi\in L^\infty([0,q])\right\}$ and $L^\infty_{\gamma}(\mathbb R_+)=\left\{\psi+\text{T}_{\gamma}[\psi]:\psi\in L^\infty(\mathbb R_+\setminus[0,\gamma])\right\}.$ 

\subsubsection{} \textit{The Perron-Frobenius operators.}
For $q\leq\gamma<\infty,$ the \textit{Koopman operator} $\mathcal C_\gamma : L^\infty([0,q])\rightarrow L^\infty([0,q])$ associated to $U_\gamma$ be the map $\mathcal C_\gamma[\varphi](x)=\varphi\circ U_\gamma(x),$ 
where $\varphi\in L^\infty([0,q]).$ The predual adjoint of $\mathcal C_\gamma$ is the \textit{Perron-Frobenius}  
operator $\mathcal P_\gamma : L^1([0,q])\rightarrow L^1([0,q])$ given by 
\begin{equation}
\mathcal P_\gamma [h](x)=\sum\limits_{v=1}^\infty\frac{q\gamma}{(qv+x)^2}h\left(\frac{q\gamma}{qv+x}\right).
\end{equation}
The operator $\mathcal P_\gamma$ is linear and a norm contraction on $L^1([0,q]).$ Thus the point spectrum $\sigma_{point}(\mathcal P_\gamma)$ of $\mathcal P_\gamma$ is contained in $\bar{\mathbb D}.$ 
For $q<\gamma<\infty,$ consider the following restriction operators:  
\[\begin{cases} 
      \text{R}_5~:~L^\infty(\mathbb R_+\setminus[0,q])\rightarrow 
L^\infty(\mathbb R_+\setminus[0,\gamma]) \\
\text{R}_6~:~L^\infty([0,\gamma])\rightarrow 
L^\infty([0,q])\\
\text{R}_7~:~L^\infty([0,\gamma])\rightarrow 
L^\infty([0,\gamma]\setminus[0,q])\\
\text{R}_8~:~L^\infty(\mathbb R_+\setminus[0,q])\rightarrow 
L^\infty([0,\gamma]\setminus[0,q]) 
   \end{cases}
\] 
The corresponding pre-dual adjoints are the maps $\text{R}_5^*,\text{R}_6^*,\text{R}_7^*$ and 
$\text{R}_8^*$ respectively. As $\gamma>q,$ a simple calculation shows that $\mathcal C_\gamma^2=\text{R}_6\text{T}_{\gamma}\text{R}_5\text{O}_p$ and hence $\mathcal P_\gamma^2=\text{O}_q^*\text{R}_5^*\text{T}_{\gamma}^*\text{R}_6^*.$ 

\begin{proposition}\label{prop35}
For $q<\gamma<\infty,$ suppose $f\in L^1(\mathbb R_+)$ such that $f=f_1+f_2+f_3,$ where 
$f_1\in L^1([0,q]),$ $f_2\in L^1([0,\gamma]\setminus[0,q]),$ and 
$f_3\in L^1(\mathbb R_+\setminus[0,\gamma]).$ Then $f\in\mathcal K_\gamma^\perp$ 
if and only if $(i)~(\text{I}-\mathcal P_\gamma^2)f_1=\text{O}_q^*(-\text{R}_8^*+\text{R}_5^*\text{T}_{\gamma}^*\text{R}_7^*)f_2,$ where $\text{I}$ is the identity operator on $L^1([0,q]),$ and $(ii)~f_3=-\text{T}_{\gamma}^*\text{R}_6^*f_1-\text{T}_{\gamma}^*\text{R}_7^*f_2.$ 
\end{proposition}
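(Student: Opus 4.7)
The plan is to mirror the approach used for Proposition \ref{prop30} (which itself follows (\cite{MHR}, Proposition 6.1)), exploiting the weak-star density descriptions of $L^\infty_q(\mathbb R_+)$ and $L^\infty_\gamma(\mathbb R_+)$ and translating the annihilation conditions via duality into identities relating the operators $\text{O}_q^*, \text{T}_\gamma^*$ and the restriction adjoints $\text{R}_5^*,\ldots,\text{R}_8^*$.

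First I would unpack the condition $f\in\mathcal K_\gamma^\perp$ into its two constituent requirements: $\langle f,\varphi+\text{O}_q[\varphi]\rangle_{\mathbb R_+}=0$ for every $\varphi\in L^\infty([0,q])$, and $\langle f,\psi+\text{T}_{\gamma}[\psi]\rangle_{\mathbb R_+}=0$ for every $\psi\in L^\infty(\mathbb R_+\setminus[0,\gamma])$. Splitting $f=f_1+f_2+f_3$ according to the disjoint support decomposition of $\mathbb R_+$ and noting that $\varphi$ is supported in $[0,q]$ while $\text{O}_q[\varphi]$ vanishes there (and similarly that $\psi$ is supported in $\mathbb R_+\setminus[0,\gamma]$ while $\text{T}_\gamma[\psi]$ vanishes there), the first pairing becomes
\[
\langle\varphi,f_1\rangle_{[0,q]}+\langle\text{R}_8\text{O}_q[\varphi],f_2\rangle_{[0,\gamma]\setminus[0,q]}+\langle\text{R}_5\text{O}_q[\varphi],f_3\rangle_{\mathbb R_+\setminus[0,\gamma]}=0,
\]
and the second becomes
\[
\langle\psi,f_3\rangle_{\mathbb R_+\setminus[0,\gamma]}+\langle\text{R}_6\text{T}_\gamma[\psi],f_1\rangle_{[0,q]}+\langle\text{R}_7\text{T}_\gamma[\psi],f_2\rangle_{[0,\gamma]\setminus[0,q]}=0.
\]

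Next, by moving everything onto the fixed test function $\varphi$ (respectively $\psi$) via the pre-dual adjoints and using arbitrariness, these two conditions are equivalent to the pair of pointwise identities
\[
f_1+\text{O}_q^*\text{R}_8^*f_2+\text{O}_q^*\text{R}_5^*f_3=0,\qquad f_3+\text{T}_\gamma^*\text{R}_6^*f_1+\text{T}_\gamma^*\text{R}_7^*f_2=0.
\]
The second of these is precisely condition (ii). Substituting the resulting expression for $f_3$ into the first equation and invoking the factorization $\mathcal P_\gamma^2=\text{O}_q^*\text{R}_5^*\text{T}_\gamma^*\text{R}_6^*$ recorded just before the proposition, I obtain
\[
(\text{I}-\mathcal P_\gamma^2)f_1=\text{O}_q^*\bigl(-\text{R}_8^*+\text{R}_5^*\text{T}_\gamma^*\text{R}_7^*\bigr)f_2,
\]
which is condition (i). The reverse direction is obtained by reading the same chain of equivalences backward.

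I expect no serious obstacle here; the argument is formal once one keeps the supports and the four restriction/adjoint operators straight. The main point requiring care is that the weak-star density statements recorded earlier let us pass from vanishing against the exponential systems $\{e^q_n\}$ and $\{e^\gamma_n\}$ to vanishing against \emph{all} of $L^\infty_q(\mathbb R_+)$ and $L^\infty_\gamma(\mathbb R_+)$; the rest is bookkeeping with the operator identity $\mathcal P_\gamma^2=\text{O}_q^*\text{R}_5^*\text{T}_\gamma^*\text{R}_6^*$.
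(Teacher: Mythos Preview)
Your proposal is correct and follows exactly the approach the paper intends: the paper itself omits the argument and simply refers to (\cite{MHR}, Proposition 6.1), and what you have written is precisely the adaptation of that proof to the present setting, using the descriptions of $L^\infty_q(\mathbb R_+)$ and $L^\infty_\gamma(\mathbb R_+)$ together with the identity $\mathcal P_\gamma^2=\text{O}_q^*\text{R}_5^*\text{T}_\gamma^*\text{R}_6^*$.
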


\begin{proof}
The proof of Proposition \ref{prop35} works along the same lines as in the proof of 
(\cite{MHR}, p. 52, Proposition 6.1), hence omitted.
\end{proof}

\begin{remark}
For $0<\gamma<q,$ in analogy with the results in \cite{HR2} it seems likely that, the 
Perron-Frobenius operator $\mathcal P_\gamma$ has no eigenfunction corresponding to the 
eigenvalue $1.$ For the critical case $\gamma=q,$ $\mathcal P_\gamma$ has one-dimensional 
eigenspace corresponding to the eigenvalue $1.$ This in turn implies that $\mathcal{AC}\left(\Gamma_+,\Lambda_\gamma^\theta\right)=\{0\}$ for $0<\gamma<q,$ and for $\gamma=q,$ $\mathcal{AC}\left(\Gamma_+,\Lambda_\gamma^\theta\right)$ 
is one-dimensional. This problem is open. 
\end{remark}

\subsection{Proof of Theorem \ref{th12}}
The proof of Theorem \ref{th12} directly follows from the proof of Theorem \ref{th19}.  

\begin{theorem}\label{th19}
For $q<\gamma<\infty,$ there exits a bounded linear operator $\mathcal B_+~:~\text{BV}([0,\gamma]\setminus[0,q])\rightarrow L^1(\mathbb R_+)$ such that the range of $\mathcal B_+$ is infinite-dimensional, and 
contained in $\mathcal K_\gamma^\perp.$
\end{theorem}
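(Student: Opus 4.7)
The plan is to mirror the construction of Theorem \ref{th14} in the one-sided setting, adapting every operator and every dual-pairing computation to the interval $[0,q)$ and to the space $L^1(\mathbb R_+)$. First, I would establish the exterior spectral picture for $\mathcal P_\gamma$ analogous to Theorem \ref{th13}: namely that $1$ is a simple eigenvalue, that it is the only eigenvalue of $\mathcal P_\gamma$ on $\partial\bar{\mathbb D}$, and that the corresponding eigenfunction is a scalar multiple of a strictly positive density $\varrho_0 \in \text{BV}([0,q])$ of the unique ergodic $V_\gamma$-invariant absolutely continuous probability measure. The positivity of $\varrho_0$ requires the one-sided analog of Lemma \ref{lemma10}: for every nonempty open interval $J_0 \subset [0,q]$ there is $n_0$ with $(0,q) \subset V_\gamma^n(J_0)$ for $n \geq n_0$. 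This is proved by the same bootstrap as in Lemma \ref{lemma10}, using the uniform expansiveness $|V_\gamma'| \geq \gamma_0 > 1$ and a separate argument for the edge branch $J_{v_0}$ (when $\gamma_0$ is not an integer), patterned exactly on case 2(A) there but with only one edge branch to handle.

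With that in hand, Theorem A delivers the spectral decomposition
\begin{equation*}
\mathcal P_\gamma^n[h] = \langle h, \phi_0\rangle_{[0,q]}\,\varrho_0 + \mathcal Z_\gamma^n[h], \qquad h \in L^1([0,q]),\ n \geq 1,
\end{equation*}
with $\mathcal Z_\gamma$ acting on $\text{BV}([0,q])$ with spectral radius $< 1$. The identification $\phi_0 \equiv 1$ then proceeds exactly as in (\ref{eq21}) of the proof of Theorem \ref{th14}, by evaluating $\langle h, 1\rangle_{[0,q]} = \langle \mathcal P_\gamma^n[h], 1\rangle_{[0,q]}$, passing to the limit, and invoking density of $\text{BV}([0,q])$ in $L^1([0,q])$. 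In particular $\langle \mathcal Z_\gamma^n[h], 1\rangle_{[0,q]} = 0$ for every $h$ and every $n$, and $I - \mathcal Z_\gamma^2$ is invertible on $\text{BV}([0,q])$.

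Given $f_2 \in \text{BV}([0,\gamma]\setminus[0,q])$, I would then define
\begin{equation*}
f_1 := (I - \mathcal Z_\gamma^2)^{-1}\bigl(-\text{O}_q^*\text{R}_8^* + \text{O}_q^*\text{R}_5^*\text{T}_\gamma^*\text{R}_7^*\bigr) f_2,
\end{equation*}
which is well defined in $\text{BV}([0,q])$ provided one first checks, along the lines of (\ref{eq210}) and the one-sided change of variables used in the definitions of $\text{O}_q$ and $\text{T}_\gamma$, that the right-hand side actually lies in $\text{BV}([0,q])$. A direct integration shows that this right-hand side also pairs to zero against $1$, and hence $\langle f_1, 1\rangle_{[0,q]} = 0$, so that $\mathcal P_\gamma^n[f_1] = \mathcal Z_\gamma^n[f_1]$ for all $n$. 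I then set $f_3 := -\text{T}_\gamma^*\text{R}_6^* f_1 - \text{T}_\gamma^*\text{R}_7^* f_2 \in L^1(\mathbb R_+\setminus[0,\gamma])$ and define $\mathcal B_+ f_2 := f_1 + f_2 + f_3$. Condition (i) of Proposition \ref{prop35} is the defining relation of $f_1$, and condition (ii) is the defining relation of $f_3$; thus $\mathcal B_+ f_2 \in \mathcal K_\gamma^\perp$. Since $\mathcal B_+$ restricts to the identity on the middle piece $[0,\gamma]\setminus[0,q]$, its range is automatically infinite-dimensional.

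The main obstacle is the dynamical input: ensuring that the exterior point spectrum of $\mathcal P_\gamma$ on the half-line setting reduces to the single simple eigenvalue $1$, with a strictly positive eigenfunction. Concretely, this is where the one-sided analog of Lemma \ref{lemma10} is needed, including the careful treatment of the single edge fundamental interval $J_{v_0}$ by the same kind of geometric-growth argument (finding $\gamma_0' > 1$ close to $1$ so that $m(V_\gamma(J)) \geq \gamma_0' m(J)$ on a suitable subinterval of $J_{v_0}$) that underlies case 2(A) of Lemma \ref{lemma10}. Once that is in place, the rest of the argument is essentially a one-sided transcription of the proof of Theorem \ref{th14}, with $(\text{S}_p,\text{R}_1,\text{R}_2,\text{R}_3,\text{R}_4,\mathcal P_\beta,\mathcal Z_\beta)$ replaced throughout by $(\text{O}_q,\text{R}_5,\text{R}_6,\text{R}_7,\text{R}_8,\mathcal P_\gamma,\mathcal Z_\gamma)$.
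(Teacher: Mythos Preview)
Your proposal is correct and follows precisely the route the paper indicates: the paper's own proof of Theorem~\ref{th19} simply states that it ``works along a similar path as in Theorem~\ref{th14}, hence omitted,'' and your outline supplies exactly that transcription, replacing $(\text{S}_p,\text{R}_1,\text{R}_2,\text{R}_3,\text{R}_4,\mathcal P_\beta,\mathcal Z_\beta)$ by $(\text{O}_q,\text{R}_5,\text{R}_6,\text{R}_7,\text{R}_8,\mathcal P_\gamma,\mathcal Z_\gamma)$ and noting the need for the one-sided analogs of Lemma~\ref{lemma10} and Theorem~\ref{th13}. Your identification of the single edge branch $J_{v_0}$ as the main dynamical subtlety is also consistent with the paper's discussion of the fundamental intervals for $V_\gamma$.
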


\begin{proof}
We actually prove more precise statement, namely that, there exits a bounded linear operator 
$\mathcal B_+~:~\text{BV}([0,\gamma]\setminus[0,q])\rightarrow L^1(\mathbb R_+)$ such that   
$\mathcal B_+f(x)=f(x)$ a.e. $x\in [0,\gamma]\setminus[0,q],$ and for all 
$f\in\text{BV}([0,\gamma]\setminus[0,q]).$ Moreover, $\mathcal B_+$ has infinite-dimensional 
range which is contained in $\mathcal K_\gamma^\perp.$ The proof of Theorem \ref{th19} works 
along a similar path as in Theorem \ref{th14}, hence omitted.
\end{proof}

\bigskip

\noindent{\bf{Acknowledgements.}} The author wishes to thank E. K. Narayanan, Rama Rawat, R. K. Srivastava, 
and Sundaram Thangavelu for several valuable suggestions during the preparation of this manuscript. The author gratefully acknowledges the support provided by NBHM post-doctoral fellowship from the Department of Atomic Energy (DAE), Government of India. The author was supported by the Department of Mathematics, IISc Bangalore, India.

\bigskip

%\small

\end{document}